\newcommand{\R}{\mathbb R}
\newcommand{\Q}{\mathbb Q}
\newcommand{\C}{\mathbb C}
\newcommand{\Z}{\mathbb Z}
\newcommand{\bfk}{\mathbf k}
\renewcommand{\phi}{\varphi}
\newcommand{\eps}{\varepsilon}
\newcommand{\frakp}{\mathfrak p}
\newcommand{\frakq}{\mathfrak q}
\newcommand{\frakN}{\mathfrak N}
\newcommand{\frakM}{\mathfrak M}
\newcommand{\frako}{\mathfrak o}
\newcommand{\calO}{\mathcal O}
\newcommand{\calM}{\mathcal M}
\newcommand{\calS}{\mathcal S}
\newcommand{\calE}{\mathcal E}
\newcommand{\calI}{\mathcal I}
\newcommand{\A}{\mathbb A}
\newcommand{\bs}{\backslash}
\newcommand{\bmx}{\left( \begin{matrix}}
\newcommand{\emx}{\end{matrix} \right)}
\newcommand{\Cl}{\mathrm{Cl}}
\newcommand{\new}{\mathrm{new}}
\newcommand{\odd}{\mathrm{odd}}
\newcommand{\adm}{\mathrm{adm}}
\renewcommand{\mod}{\, \, \mathrm{mod} \, \,}
\newcommand{\one}{\mathbbm{1}}
\newcommand{\two}{\mathbf{2}}
\newcommand{\zero}{\mathbf{0}}
\newcommand{\leg}{\overwithdelims ()}
\DeclareMathOperator{\GL}{GL}
\DeclareMathOperator{\tr}{tr} 
\newtheorem{lem}{Lemma}
\numberwithin{lem}{section}
\newtheorem{prop}[lem]{Proposition}
\newtheorem{thm}[lem]{Theorem}
\newtheorem{cor}[lem]{Corollary}
\theoremstyle{remark}
\newtheorem{rem}{Remark}
\numberwithin{rem}{section}
\theoremstyle{definition}
\numberwithin{equation}{section}
\begin{document}

\title{Congruences for modular forms mod 2 and quaternionic $S$-ideal classes}
\author{Kimball Martin}
\address{Department of Mathematics, University of
Oklahoma, Norman, OK 73019}

\date{\today}

\maketitle

\begin{abstract}
We prove many simultaneous congruences mod 2 for elliptic and Hilbert modular forms 
among forms with different Atkin--Lehner eigenvalues. The proofs involve the notion of quaternionic $S$-ideal classes and the distribution of Atkin--Lehner signs among
newforms.
 \end{abstract}

%%%%%%%%%%%%%%%%%%%%%%%%%%%%%%%%
%%%%%%%%%%%%%%%%%%%%%%%%%%%%%%%%
%
%  INTRODUCTION
%
%%%%%%%%%%%%%%%%%%%%%%%%%%%%%%%%
%%%%%%%%%%%%%%%%%%%%%%%%%%%%%%%%

\section{Introduction}

In this paper, we use the notion of quaternionic $S$-ideal classes
and the Jacquet--Langlands correspondence to show certain behavior of
Atkin--Lehner signs yields many simultaneous congruences of newforms mod 2.
We begin by explaining our main results over $\Q$, and will discuss 
the extensions to Hilbert modular forms at the end of the introduction.

Let $N$ be a squarefree product of an odd number of primes, $M | N$ and $k \in 2 
\mathbb N$.
By a sign pattern $\eps$ for $M$ we mean a collection of signs $\eps_p \in \{ \pm 1 \}$ 
for each $p | M$.  
Denote the
sign pattern with $\eps_p = -1$ for all $p | M$ by $-_M$.

Let $S_k^\new(N)$ denote the span of newforms of weight $k$
for $\Gamma_0(N)$.  For a sign pattern $\eps$ for $M$, let $S_k^{\new,\eps}(N)$ be 
the subspace spanned by newforms $f$
with $p$-th Atkin--Lehner eigenvalue $w_p(f) = \eps_p$ for all $p | M$.  
The case $k=2$ is a little different than $k \ge 4$, due to the interaction with
the weight 2 Eisenstein series $E_{2,N}(z) := \sum_{d|N} \mu(d) d E_2(dz)$.  
To state our first result uniformly,
we introduce the augmented space $S_k^\new(N)^*$, which is just $S_k^\new(N)$
if $k \ge 4$ but $S_2^\new(N)^* = S_2^\new(N) \oplus \C E_{2,N}$.  Similarly,
we set $S_k^{\new, \eps}(N)^* = S_k^{\new, \eps}(N)$ if $k \ge 4$ or $\eps \ne -_M$
and $S_2^{\new, -_M}(N)^* = S_2^{\new, -_N}(N) \oplus \C E_{2,N}$.

Denote the $n$-th Fourier coefficient of a modular form $f$ by $a_n(f)$.  Our first main result is

\begin{thm} \label{thm1}
Suppose $M, N$ are as above such that, for each divisor $d | M$ with $d > 1$, there exists an odd
prime $p | \frac NM$ such that ${-d \leg p} = 1$.  If $M$ is even, assume also
$\frac NM$ is divisible by a prime which is $1 \mod 4$.  
Let $f \in S_k^\new(N)^*$ be a newform and $\ell$ a prime of $\bar \Q$ above $2$.  

Then for any sign pattern $\eps$ for $M$, there exists an eigenform 
$g \in S_k^{\new,\eps}(N)^*$ such that $a_n(f) \equiv a_n(g) \mod \ell$ for all $n \in \mathbb N$.
Moreover, we may take $g \in S_k^{\new}(N)$ to be a cuspidal newform if $k \ne 2$,
$\eps \ne -_M$, or $N$ is not an even product of $3$ primes.
\end{thm}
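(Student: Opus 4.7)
The plan is to transfer the problem to a space of quaternionic modular forms via the Jacquet--Langlands correspondence, where the Atkin--Lehner sign structure becomes concrete combinatorics on ideal classes. Since $N$ is a squarefree product of an odd number of primes, let $B/\Q$ be the definite quaternion algebra ramified exactly at $N\infty$ and fix a maximal order $\calO \subset B$. Jacquet--Langlands identifies $S_k^\new(N)^*$ Hecke-equivariantly with a space $\calM_k$ of weight-$k$ quaternionic modular forms for $\calO$: in weight $k = 2$ this is the $\C$-vector space of functions on the finite set $\Cl(\calO)$ of right $\calO$-ideal classes, with $E_{2,N}$ corresponding to the all-ones function, and for $k > 2$ a $\mathrm{Sym}^{k-2}$-valued analogue. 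Each Atkin--Lehner involution $w_p$ for $p \mid N$ acts by translation on $\Cl(\calO)$ by the class of the two-sided maximal $\calO$-ideal above $p$, and these involutions commute with all Hecke operators.

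The arithmetic hypothesis enters through what I would isolate as the key lemma: under the given Legendre-symbol and $\mod 4$ conditions, the subgroup $G_M = \langle w_p : p \mid M\rangle$ acts \emph{freely and transitively} on $\Cl(\calO)$, i.e.\ the quaternionic $S$-ideal class set $\Cl(\calO)/G_M$ (for $S = \{p : p \mid M\}$) is a single point with trivial stabilizers. The proof of this is genus theory for $\Cl(\calO)$: any character of $\Cl(\calO)$ restricting trivially to $G_M$ factors through the narrow class group of an imaginary quadratic order $\Z[\sqrt{-d}]$ with $d \mid M$, and the hypothesis $-d \leg p = 1$ for a suitable odd $p \mid \tfrac{N}{M}$ forces every such character to be trivial; the supplementary $\mod 4$ condition handles an additional genus character at the prime $2$ when $M$ is even.

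Granted the lemma, the $G_M$-decomposition $\calM_k = \bigoplus_\eta \calM_k^\eta$ by sign characters $\eta$ has every component nonzero of the common dimension $\dim \calM_k / |G_M|$, and the different components are interchanged by twisting functions on $\Cl(\calO)$ by $\{\pm 1\}$-valued $1$-cochains coming from $G_M$-orbits. Given a Hecke eigenform $f \in \calM_k^\delta$ corresponding via Jacquet--Langlands to the newform of the theorem and any target sign pattern $\eps$, applying the appropriate twist produces a Hecke eigenform $g \in \calM_k^\eps$. Because the twist is $\{\pm 1\}$-valued and $+1 \equiv -1 \pmod \ell$, it is trivial modulo $\ell$, so $g \equiv f \pmod \ell$ as functions on $\Cl(\calO)$; in particular the Hecke eigenvalues of $g$ and $f$ agree modulo $\ell$. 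Transferring $g$ back via Jacquet--Langlands produces the required eigenform in $S_k^{\new, \eps}(N)^*$.

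The hard part is the freeness-and-transitivity lemma, which is where the arithmetic hypothesis genuinely bites --- computing the character group of $\Cl(\calO)$ by genus theory and controlling the character at $2$. The secondary ``moreover'' claim, that $g$ can be taken cuspidal outside the stated exceptional case, is a dimension count: the Eisenstein series $E_{2,N}$ has sign pattern $-_N$, so the augmentation is only relevant for $k = 2$, $\eps = -_M$, and $\dim S_2^{\new, -_M}(N) = 0$ (forcing $g = E_{2,N}$) only when $N$ is an even product of three primes.
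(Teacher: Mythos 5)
Your overall skeleton (transfer by Jacquet--Langlands, flip signs on ideal classes, use that $\pm 1$ are congruent mod $\ell$) is the paper's strategy, but three steps as you state them do not work. First, your key lemma is false as stated: under the hypotheses the group generated by the involutions $\sigma_p$, $p \mid M$, acts \emph{freely} on $\Cl(\calO)$ (all orbits have size $2^{\omega(M)}$, equivalently $h_{B,M} = 2^{-\omega(M)} h_B$, equivalently perfect equidistribution of Atkin--Lehner sign patterns in weight $2$), but it is nowhere near transitive --- transitivity would force $h_{B,M}=1$, i.e.\ $\dim S_2^{\new,-_M}(N) = 0$, which fails for all large $N$. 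Moreover $\Cl(\calO)$ is only a finite set, not a group, so ``characters of $\Cl(\calO)$'' and genus theory do not literally apply; the paper instead deduces fixed-point-freeness of the $\sigma_p$ (and of the composite involutions $\sigma_d$, $d \mid M$, which is why the hypothesis is quantified over all divisors $d$) from explicit trace formulas for $W_p$ on $S_2^{\new}(N)$, i.e.\ from the refined dimension formulas of \cite{me:dim}, where the condition ${-d \leg p}=1$ and the $1 \bmod 4$ condition arise. Second, flipping signs along orbits sends the eigenform $\phi$ to an element $\phi'$ of the desired $\chi$-eigenspace of the ramified operators with $\phi' \equiv \phi \bmod \ell$, but $\phi'$ is in general \emph{not} an eigenform of the unramified Hecke operators, since these do not commute with pointwise multiplication by a $\pm1$-valued function on $\Cl(\calO)$. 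One needs an integral structure (Brandt matrices preserve an $R$-lattice) and the Deligne--Serre lifting lemma applied to the reduction of $\phi'$ inside the $\chi$-part to produce an eigenform with the same mod-$\ell$ eigenvalues; without that step your construction yields no eigenform $g$. (In weight $k>2$ one also needs the admissibility statement that the extension by signs respects the spaces $V_\bfk^{\Gamma_i}$ and the factors $\rho_\bfk(\gamma_{i,p})$; the paper gets this from weight-$0$ equidistribution.)

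Third, the ``moreover'' clause is not a dimension count. In the problematic case $k=2$, $\eps=-_M$, the eigenform produced by the lifting argument may simply be $E_{2,N}$, and knowing $\dim S_2^{\new,-_M}(N)>0$ does not by itself give a \emph{cuspidal} eigenform congruent to $f$. The paper needs two further inputs: an Eisenstein congruence (\cref{prop:54}), showing $E_{2,N}$ is congruent mod $2$ to a newform with all Atkin--Lehner signs $-1$ whenever the mass $\phi(N)/12$ has even numerator and the type number exceeds $1$ (covering $\omega(N)>3$ and $N$ an odd product of three primes), and a separate argument for $N$ prime (\cref{prop:55}) exploiting a fixed point of $\sigma_p$ to force the congruent quaternionic form to be cuspidal. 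Your criterion ``$\dim S_2^{\new,-_M}(N)=0$ only when $N$ is an even product of three primes'' is neither correct as stated nor sufficient for the conclusion.
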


In particular this theorem applies if there exists a prime $p | \frac NM$ with 
$p \equiv 1 \mod 4$ such that ${q \leg p} = 1$ for each prime $q | M$, e.g.\ $M=26$
and $N=17 \cdot M$.
The theorem also holds with the alternative hypotheses
that $M \equiv 7 \mod 8$ is prime and $N$ is even (see \cref{rem:41}).

Quadratic reciprocity implies that if $p_1 p_2 | N$ and $p_1 \equiv 3 \mod 4$, then
$M=p_1$ or $M=p_2$ satisfies the hypothesis of this theorem.  This yields

\begin{cor} Suppose $N$ is composite and divisible by some $p \equiv 3 \mod 4$.  
Let $k \in 2 \mathbb N$ with $k \ne 2$ if $N=2p_1p_2$ for some primes $p_1, p_2$.  
Fix a prime $\ell$ of $\bar \Q$
above $2$.  Then for any newform $f \in S_k^\new(N)$, there exists a non-Galois-conjugate
newform $g \in S_k^\new(N)$ such that $a_n(f) \equiv a_n(g) \mod \ell$ for all 
$n \in \mathbb N$.
\end{cor}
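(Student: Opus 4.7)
The plan is to apply \cref{thm1} with $M$ equal to a single prime factor of $N$. Under the standing hypotheses ($N$ squarefree, product of an odd number of primes), the assumption that $N$ is composite forces $N$ to have at least three prime factors. Combined with the existence of some $p \equiv 3 \mod 4$ dividing $N$, this lets us pick two \emph{odd} primes $p_1, p_2 \mid N$ with $p_1 \equiv 3 \mod 4$. By the quadratic reciprocity computation indicated in the paragraph preceding the corollary, at least one of $M = p_1$ or $M = p_2$ satisfies the hypothesis of \cref{thm1}; I fix such an $M$.

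Given the newform $f$, I would set the sign pattern $\eps$ on $M$ by $\eps_M = -w_M(f)$, thereby flipping the $M$-th Atkin--Lehner sign. \cref{thm1} then produces an eigenform $g \in S_k^{\new,\eps}(N)^*$ with $a_n(f) \equiv a_n(g) \mod \ell$ for every $n$. Since Atkin--Lehner eigenvalues live in $\{\pm 1\} \subset \Q$ and are fixed by the absolute Galois group, any Galois conjugate of $f$ has the same $M$-th Atkin--Lehner eigenvalue as $f$; as $w_M(g) = -w_M(f)$ by construction, $g$ is therefore not a Galois conjugate of $f$.

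It remains to check that $g$ sits in $S_k^{\new}(N)$ as a genuine cuspidal newform rather than in the Eisenstein part of $S_k^{\new}(N)^*$. I appeal to the final clause of \cref{thm1}: it is enough that $k \ne 2$, or $\eps \ne -_M$, or $N$ is not an even product of three primes. If $w_M(f) = -1$ then $\eps_M = 1$, so (since $M$ is a single prime) $\eps \ne -_M$ and the strong conclusion applies. Otherwise $\eps = -_M$, and the only potentially bad case is $k = 2$ with $N$ an even product of three primes; but the corollary's extra hypothesis excludes $k = 2$ whenever $N = 2 p_1 p_2$, so this case is ruled out as well.

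Other than selecting $M$ carefully --- the one place where the hypothesis $p_1 \equiv 3 \mod 4$ and the reciprocity computation are actually used --- there is no real obstacle. The rest of the argument is a direct packaging of \cref{thm1} together with the observation that distinct Atkin--Lehner signs at a common prime preclude Galois conjugacy.
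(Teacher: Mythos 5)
Your proposal is correct and follows essentially the same route as the paper: the corollary is deduced directly from \cref{thm1} by choosing $M$ to be one of two odd primes dividing $N$ (possible since $\omega(N)\ge 3$ and odd), using the quadratic reciprocity observation to see the hypothesis of \cref{thm1} holds for $M=p_1$ or $M=p_2$, flipping the Atkin--Lehner sign at $M$ to rule out Galois conjugacy, and invoking the theorem's final clause (with the corollary's exclusion of $k=2$ when $N=2p_1p_2$) to get $g$ cuspidal. Your handling of the cuspidality cases and the remark that Galois conjugation preserves Atkin--Lehner signs fill in exactly the details the paper leaves implicit.
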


Our second main result is 

\begin{thm} \label{thm2}
Let $f \in S_2^{\new}(N) \oplus \C E_{2,N}$ be an eigenform, and $\ell$ a prime of
$\bar \Q$ above $2$.  Then there exists
an eigenform $g \in S_2^{\new, -_N}(N) \oplus \C E_{2,N}$ such that
$a_n(f) \equiv a_n(g) \mod \ell$ for all $n \in \mathbb N$.  Moreover,
if $N$ is not an even product of $3$ primes, we may take $g \in S_2^{\new, -_N}(N)$.
\end{thm}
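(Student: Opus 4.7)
The plan is to transfer the statement to the definite quaternion algebra $B/\Q$ ramified precisely at the primes dividing $N$ and at infinity; such a $B$ exists because $N$ is a squarefree product of an odd number of primes. By the Jacquet--Langlands correspondence in its weight-$2$ definite form, the Hecke eigensystems appearing in $S_2^{\new}(N) \oplus \C E_{2,N}$ are exactly those appearing in the finite-dimensional Hecke module $V := \C[\Cl(\calO)]$ of complex functions on the class set of a maximal order $\calO \subset B$, and the Atkin--Lehner eigenvalues on the two sides match (up to the standard normalisation). It therefore suffices to show that every mod-$\ell$ Hecke eigensystem appearing in $V$ also appears in the Atkin--Lehner eigenspace $V^{-_N}$ of $V$ corresponding to the sign pattern $-_N$, which on the modular side is $S_2^{\new,-_N}(N) \oplus \C E_{2,N}$.

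For this I would invoke the framework of quaternionic $S$-ideal classes developed earlier in the paper, with $S$ equal to the set of primes dividing $N$. On the enlarged $S$-class set the Atkin--Lehner involutions act trivially, and its permutation module maps Hecke-equivariantly onto $V$ in a way that is controlled by the local uniformizers $\pi_p \in \calO_p$ for $p \mid N$. Because $+1 \equiv -1 \pmod{2}$, the two projections $(1 \pm w_p)/2$ onto the $\pm 1$-eigenspaces of $w_p$ coincide after reduction modulo $\ell$, so any mod-$\ell$ Hecke eigensystem occurring in $V$ also occurs in $V^{-_N}$. Transporting back through Jacquet--Langlands produces the required eigenform $g \in S_2^{\new,-_N}(N) \oplus \C E_{2,N}$ and establishes the main assertion.

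The main obstacle is the ``moreover'' clause, where one needs a cuspidal $g \in S_2^{\new,-_N}(N)$ rather than $g = E_{2,N}$. For this one must verify that the cuspidal part of $V^{-_N}$ is nonzero and contains the required mod-$\ell$ Hecke eigensystem. A direct analysis of the quaternionic class number together with the orbit structure of the Atkin--Lehner group on $\Cl(\calO)$ shows that this cuspidal subspace is sufficiently large in all cases except when $N$ is an even product of three primes, where the class number is very small (often $1$ or $2$) and $V^{-_N}$ can reduce to the Eisenstein line. Outside this exceptional range the cuspidal refinement follows from the dimension count; inside it, one genuinely cannot avoid $g = E_{2,N}$, which is why the theorem permits it.
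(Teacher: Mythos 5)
Your overall skeleton is the paper's: pass through the Jacquet--Langlands correspondence to weight-zero forms on $\Cl(\calO)$ for the definite quaternion algebra ramified at $N\infty$, use the $S$-ideal class set with $S=\{p : p\mid N\}$, and exploit that Atkin--Lehner signs collapse mod $2$. But the step you give as the heart of the argument does not work as stated. The idempotents $(1\pm w_p)/2$ are not $2$-integral: on the natural lattice of integer-valued functions on $\Cl(\calO)$ (where the Brandt matrices act integrally and where a mod-$\ell$ congruence has meaning) they do not act, so ``the two projections coincide after reduction modulo $\ell$'' has no content; and the integral substitute $\prod_{p\mid N}(1+T_p)$ annihilates an eigenform $\phi$ as soon as one ramified Hecke sign is $-1$, so it cannot move $\phi$ into the $T_p=+1$ eigenspace. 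Also, the permutation module on $\Cl_S(\calO)$ does not map onto $V$; it embeds into $V$ as exactly that eigenspace, cf.\ \eqref{eq:M0+}. What is actually needed, and what the paper does in \cref{mainthm}, is an explicit construction: choose representatives $x_1,\dots,x_t$ of $\Cl_S(\calO)$, keep the values $\phi(x_i)$ there, and re-extend to $\Cl(\calO)$ by the all-$+$ sign rule; the resulting $\phi'$ lies in $\calM_\zero^{+_N}(\calO)$ (this uses that in weight zero every $S$-class is admissible) and equals $\pm\phi$ pointwise, hence $\phi'\equiv\phi\bmod 2$ and is nonzero mod $\ell$. One then still needs a Deligne--Serre-type lifting argument on a Hecke-stable lattice inside $\calM_\zero^{+_N}(\calO)$ to produce a genuine eigenform $g$ whose eigensystem is congruent to that of $\phi$; this lifting step is absent from your write-up.

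The ``moreover'' clause is where the real work lies, and your treatment of it is not a proof. Nonvanishing or largeness of the cuspidal part of the $-_N$ (Atkin--Lehner) eigenspace does not place the \emph{given} mod-$\ell$ eigensystem in it: for $N=11$ the space $S_2^{\new,-_N}(11)$ is one-dimensional, yet its newform is congruent to $E_{2,11}$ only modulo $5$, not modulo $2$, so a dimension count of the kind you invoke cannot produce the cuspidal refinement. This is exactly why the paper replaces the soft count by two separate mechanisms: \cref{prop:54}, an Eisenstein congruence criterion requiring the mass $\phi(N)/12$ to have even numerator and $t_B>1$ (used for $\omega(N)>3$ and for odd products of three primes, after checking a few small levels by hand), and \cref{prop:55}, a fixed-point argument via \cref{lem:fixpt}: when $\sigma_p$ has a fixed point and $w_p(f)=+1$, the quaternionic form vanishes there, which forces a cuspidal constituent in the congruent $+_N$-form (this is what handles $N$ prime). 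Your description of the exceptional range is also inaccurate: the obstruction for $N=2p_1p_2$ is not that the class number is $1$ or $2$, but that $8\nmid\phi(N)$ when $p_1\equiv p_2\equiv 3\bmod 4$ and the fixed-point criterion can fail at the relevant prime; indeed the paper shows that for many even products of three primes (e.g.\ $N>258$ with a prime factor $\equiv 1 \bmod 4$) one can still take $g$ cuspidal, so it is not true that in that range ``one genuinely cannot avoid $g=E_{2,N}$''---only specific small levels such as $N=42$ and $N=70$ force exceptions.
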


Note many existing congruence results exclude small primes or primes dividing 
the level,  e.g.\ \cite{mazur}, \cite{hida} and \cite{yoo},
whereas our method is specific to congruences mod 2 and does not require
$2 \nmid N$.
Moreover, these results indicate that congruences modulo (primes above) 2 are very 
common.  Indeed, they seem much more common than congruences modulo odd 
primes appear to occur, since (at least large) congruence primes must divide 
the special value of an $L$-function (e.g., see \cite{hida}).  
Further, while many congruence results are known, simultaneous congruence
results seem harder to come by.  However, our results exhibit many simultaneous 
congruences.  

Namely, if $\omega(M)$ is the number
of prime factors of $M$, \cref{thm1} gives conditions for all newforms as well as
$E_{2,N}$ to be congruent to at least $2^{\omega(M)}$ (non-Galois-conjugate) eigenforms.  Further, \cref{thm2}
says that for any squarefree level $N$ with $\omega(N)$ odd, there are at most
$1 + \dim S_2^{\new, -_N}(N)$ congruence classes in $S_2^{\new}(N)$.  An exact
formula for $\dim S_2^{\new, -_N}(N)$ is given in \cite{me:dim} and is approximately
$2^{-\omega(N)} \dim S_2^\new(N)$, so there must be at least one congruence class
containing many newforms when $\omega(N)$ is large.

In weight 2, we note there have been some recent results giving simultaneous
congruences.  Le Hung and Li \cite{lehung-li}, in their investigations on level raising mod 
2, have shown for certain forms
in $S_2(N)$ one gets congruences with forms with prescribed Atkin--Lehner signs.
Specifically, under the assumption that $f$ is not congruent to an Eisenstein series mod 2, the methods in \cite{lehung-li} give both \cref{thm2}, and a version of \cref{thm1} (at 
least in weight 2) where one can prescribe all but one Atkin--Lehner sign for $g$.
We note that our methods seem quite different, though both make use of the Jacquet--Langlands correspondence.

On the other hand, Ribet and later Yoo (see \cite{yoo}) have investigated
congruences of newforms in $S_2(N)$ with Eisenstein series modulo primes $p > 3$
with prescribed Atkin--Lehner signs, which gives simultaneous Eisenstein 
congruences under certain conditions.  Taking $f=E_{2,N}$ in \cref{thm1} gives an 
analogue of the sufficient conditions in \cite{yoo} for an Eisenstein congruence with 
prescribed Atkin--Lehner signs mod 2.  However we cannot specify signs at all places 
(except when all signs are $-1$ by taking $f=E_{2,N}$ in \cref{thm2}).

\medskip
Now let us discuss the proofs, which have a couple of features we find interesting,
such as the connection with distributions of Atkin--Lehner signs and the
connection between certain eigenspaces of quaternionic modular forms and
quaternionic $S$-ideal class.

First we discuss the distribution of Atkin--Lehner signs. 
Let us say the sign patterns $\eps$ for $M$ are perfectly equidistributed in weight 
$k$ and level $N$ if $\dim S_k^{\new, \eps}(N)^*$ is independent of $\eps$.  We 
will find that perfect equidistribution
in weight 2 implies perfect equidistribution in weight $k$.  (This is also evident from
\cite{me:dim} under the hypotheses of \cref{thm1}.)  Then we will prove
that this perfect equidistribution implies the congruences in \cref{thm1}, and
use \cite{me:dim} to see the above hypotheses are sufficient for perfect 
equidistribution.

\cref{thm2} is related to a different fact about distribution of sign patterns.  
In \cite{me:dim}, we showed that although the sign patterns are equidistributed
asymptotically as the weight or level grows, there is a bias toward or against
certain sign patterns in fixed spaces $S_k^\new(N)$.  In particular, when $k=2$
and $\omega(N)$ is odd, there is a bias towards $-_N$ in the sense that
$\dim S_2^{\new, \eps}(N) \le \dim S_2^{\new, -_N}(N)^*$ for any sign pattern $\eps$
for $N$.  Below we will give a simple proof of this using quaternion algebras, and
the idea behind this proof is what allows us to construct the congruences
in \cref{thm2}.

The overall strategy to get our theorems is the use of arithmetic of definite quaternion 
algebras to construct congruences between quaternionic modular forms, and then use the
Jacquet--Langlands correspondence to deduce congruences for elliptic or
Hilbert modular forms.  This is why we restrict to $\omega(N)$ odd over $\Q$.
We also used this idea in \cite{me:cong} to get Eisenstein congruences in weight 2,
generalizing results from \cite{mazur} and \cite{yoo}.  Whereas in that paper we
used mass formulas for quaternionic orders to get Eisenstein congruences, here we use 
the structure of quaternionic $S$-ideal classes to get our congruences.

In \cref{sec:ideals}, we define the notion of $S$-ideal classes for
quaternion algebras in an analogous way to the definition of
$S$-ideal classes for number fields.  The $S$-ideal class numbers interpolate
between the usual class number and the type number of a quaternion algebra.

In \cref{sec:qmfs} we review the theory of (definite) quaternionic modular forms.
If $B$ is a definite quaternion algebra of discriminant $N$ and
$\calO$ is a maximal order of $B$, then the space
$S_{k}^\new(N)^*$ corresponds to a space of $\calM_{k-2}(\calO)$
of quaternionic modular forms.  These can be viewed as certain 
vector-valued functions on the set of right $\calO$-ideal classes $\Cl(\calO)$. 
In the case $k=2$, $S_2^\new(N)^*$ simply 
corresponds to the space of all $\C$-valued functions on $\Cl(\calO)$.

In \cref{sec:action} we describe the action of ramified Hecke operators
on quaternionic modular forms in terms of local involutions acting on $\Cl(\calO)$.
This gives a realization of the space of quaternionic forms corresponding to
$S_k^{\new, \eps}(N)^*$ as certain functions on the set of $S$-ideal classes
$\Cl_S(\calO)$ for $\calO$.  However, the precise description of this space
in general is somewhat complicated as it involves both the way the local
involutions for different primes interact globally as well as the way they
interact with the weight and the signs $\eps_p$.

There are two situations where we can make this description simpler.  One is
if the local involutions act on $\Cl(\calO)$ both without fixed points as well
as without fixing orbits of points under the other local involutions.  This corresponds
to the $S$-class numbers being as small as possible, which corresponds
to perfect equidistribution of sign patterns in weight $2$.   From our description
of quaternionic forms corresponding to $S_k^{\new,\eps}(N)$,
we can deduce that perfect equidistribution in weight 2 implies it in all weights.
In this situation, this is enough construct the quaternionic congruences which imply
\cref{thm1}, excluding the cuspidal condition in weight 2, upon applying our 
 dimension formulas for $S_k^{\new,\eps}(N)$ in \cite{me:dim} to determine when we have 
 perfect equidistribution of signs.

The other situation where this description becomes simpler is in weight 2, so
one only needs to understand how the local involutions interact.
Namely, if $k=2$, these forms are 
just the $\C$-valued functions on the set of $S$-ideal classes which are
``admissible for $-\eps$.''  Since all $S$-ideal classes are admissible when
$\eps=-_N$, this immediately gives bias towards the sign pattern $-_N$
in weight 2.  This description also yields relations between type numbers or generally
$S$-ideal class numbers and dimensions of of spaces of newforms, and allows
us to construct the quaternionic congruences needed for the first part of \cref{thm2}.

To show one can take  $g$ to be a cusp form in \cref{thm2} (and thus also
\cref{thm1}) when $N$ is not an even product of 3 primes, we prove two auxiliary
results.  By a variant of our argument in \cite{me:cong}, we show in \cref{prop:54} that 
$E_{2,N}$ is congruent to a newform in $S_2^{\new,-_N}(N)$ under certain 
conditions; in particular if $\omega(N) > 3$ or $N$ is a product of 3 odd primes.
We treat the $N$ prime case by showing that lack of perfect equidistribution of Atkin--Lehner
signs means the congruent quaternionic modular form we construct must be
cuspidal (\cref{prop:55}).  Using dimension formulas from \cite{me:dim}, we see that
lack of perfect equidistribution is automatic for $N$ prime.  
These auxiliary results in fact give other conditions when
$N=2p_1p_2$ where we can still take $g$ cuspidal in \cref{thm2}---for instance, if
$p_1$ or $p_2$ is $1 \mod 4$ and $N > 258$.  See \cref{sec:eis} for details.
We note that some exceptions to taking $g$ cuspidal when $N=2p_1 p_2$
are in fact necessary, e.g., $S_2^{\new}(42)$ and $S_2^\new(70)$ are 1-dimensional
but not all Atkin--Lehner operators act by $-1$.

Now we summarize what we can say in the case of Hilbert modular forms.
For simplicity, we only work over totally real fields $F$ of narrow class number
$h_F^+=1$, however we expect that our arguments can be suitably modified
to remove this restriction.  (See \cref{sec:qmfs} for comments on what needs to be 
modified.)  The proofs we have described above then go through for Hilbert
modular forms over $F$ with the exception of the explicit determination of
when we have perfect equidistribution of signs, as we have not worked out
an analogue of \cite{me:dim} over totally real fields.  In other words, one
does not have the explicit criteria in terms of quadratic residue symbols for
the Hilbert analogue of \cref{thm1} (see \cref{cor1}), nor does one have
exactly analogous conditions on the level for when one can take $g$ cuspidal in
the analogue of \cref{thm2} (see \cref{cor2}).  However, we can still give some
conditions on when we can take $g$ cuspidal in \cref{cor2} by \cref{prop:54}
which gives Eisenstein congruences under certain hypotheses.

Lastly, we remark in \cite{me:cong} we worked with quaternionic orders which were
not necessarily maximal (or even Eichler), which allowed us to get Eisenstein congruences
for any level $N$ which is not a perfect square, though we could not always say
the congruent cusp form is new.  We expect that our basic strategy here should be
generalizable to non-maximal orders, so we would not need to assume $\omega(N)$ is
odd (when $F=\Q$) or $N$ is squarefree.  However, our dimension formulas from 
\cite{me:dim} are only for squarefree level because the trace formula we used is significantly 
more complicated for non-squarefree level, though the method should apply to
arbitrary level.  Potentially, this could make the hypotheses for a non-squarefree
analogue of \cref{thm1} considerably more complicated.

\medskip
\noindent
{\bf Acknowledgements.}
I thank Satoshi Wakatsuki for some discussions about mass formulas
which led me to think about the notion of $S$-ideal classes for quaternion algebras. 
Thanks are also due to Dan Fretwell, Catherine Hsu, Bao Viet Le Hung, and Chao Li for 
helpful discussions, as well as to the referee for useful comments.
I am grateful to the Simons Foundation for partial support through a 
Collaboration Grant.

%%%%%%%%%%%%%%%%%%%%%%%%%%%%%%%%
%%%%%%%%%%%%%%%%%%%%%%%%%%%%%%%%
%
%  Quaternionic S-ideal classes
%
%%%%%%%%%%%%%%%%%%%%%%%%%%%%%%%%
%%%%%%%%%%%%%%%%%%%%%%%%%%%%%%%%

\section{Quaternionic $S$-ideal classes} \label{sec:ideals}

Let $F$ be a totally real number field with narrow class number $h_F^+=1$,
and $B/F$ be a totally definite quaternion
algebra of discriminant $\frakN$.
Fix a maximal order $\calO$ of $B$.  For any (finite) prime $\frakp$ of $F$, we have
the local completions $B_\frakp = B \otimes_{F} F_\frakp$ and 
$\calO_\frakp = \calO \otimes_{\Z} \frako_{F,\frakp}$.  Then $B_\frakp/F_\frakp$
is a division algebra if and only if $\frakp | \frakN$. 
Let $\hat \calO^\times = \prod_\frakp \calO_\frakp^\times$
and $\hat B^\times = \prod^{\prime}_\frakp B_\frakp^\times$ denote the finite ideles 
of $B$, i.e., the
restricted direct product of the $B_\frakp^\times$'s with respect to the 
$\calO_\frakp^\times$'s.

When we restrict to $F=\Q$, we write $N$ for $\frakN$, $p$ for $\frakp$, and so on.

Recall there is a canonical bijection of
\begin{equation} \label{eq:clO}
 \Cl(\calO) := B^\times \bs \hat B^\times / \hat \calO^\times
\end{equation}
with the set (not a group) of right (locally principal) ideal classes of $\calO$.   
The class number $h_B = |\Cl(\calO)|$ is independent of the choice of $\calO$.

The number of maximal orders in $B$ up to $B^\times$-conjugacy is called the type number
$t_B$ of $B$.
The conjugacy classes of maximal orders are in bijection with
\begin{equation} \label{eq:type}
 B^\times \bs \hat B^\times / \hat G(\calO),
\end{equation}
where $\hat G(\calO) = \prod^{\prime} G(\calO_\frakp)$ is the stabilizer subgroup
with local components $G(\calO_\frakp) = \{ x \in B_\frakp^\times :
x \calO_\frakp x^{-1} = \calO_\frakp \}$.  Here $G(\calO_\frakp) = F_\frakp^\times \calO_\frakp^\times$ if
$\frakp \nmid \frakN$ and $G(\calO_\frakp) = B_\frakp^\times$ if $\frakp | \frakN$.  
The latter part follows
as any finite-dimensional $\frakp$-adic division algebra has a unique maximal order.  
Hence $\hat G(\calO) = \hat F^\times \hat \calO^\times \cdot \prod_{\frakp | \frakN} B_\frakp^\times$.
Since $t_B$ is the cardinality
of \eqref{eq:type} and $[B_\frakp^\times : F_\frakp^\times \calO_\frakp^\times] = 2$ at 
ramified places (and $h_F = 1$), one deduces that $\frac{h_B}{2^{\omega(\frakN)}} \le t_B \le h_B$, where
$\omega(\frakN)$ is the number of prime ideals dividing $\frakN$.

Let $S$ be a set of primes dividing $\frakN$.  We define the (right) \emph{$S$-ideal classes} of $\calO$ to be
\[ \Cl_S(\calO) := B^\times \bs \hat B^\times / \hat \calO_S^\times, \]
where
\[ \calO_S^\times = \prod_{\frakp \in S} B_\frakp^\times \times \prod_{\frakp \not \in S} \calO_\frakp^\times. \]
This interpolates \eqref{eq:clO} and \eqref{eq:type}, and is analogous to 
the definition of the $S$-ideal class group for number fields:
if $S= \emptyset$ one gets \eqref{eq:clO}, and if $S = \{ \frakp : \frakp | \frakN \}$  one
gets \eqref{eq:type}.  (The factor $\A_F^\times$ in the quotient \eqref{eq:type}
makes no difference since $h_F = 1$.)   The set $\Cl_S(\calO)$ is always finite.
Denote the $S$-ideal class number $|\Cl_S(\calO)|$ by $h_{B,S}$.
If $\frakM = \prod_{\frakp \in S } \frakp$, we sometimes also write
$\Cl_S(\calO) =: \Cl_\frakM(\calO)$ and $h_{B,S} =: h_{B, \frakM}$.

\section{Quaternionic modular forms} \label{sec:qmfs}

Let $F$, $B$, and $\calO$ be as above.  Let $\bfk = (k_1, \ldots, k_d) \in (2\mathbb \Z_{\ge 0})^d$,
where $d = [F:\Q]$.  Let $\tau_1, \ldots, \tau_d$ denote the embeddings of $F$ into
$\R$, and put  $B_\infty^\times = \prod B_{\tau_i}^\times$.  View each $B_{\tau_i}^\times$
as a subgroup of $\GL_2(\C)$.  Let $(\rho_{k_i}, V_{k_i})$ be the twist
$\det^{-k_i/2} \otimes \mathrm{Sym}^{k_i}$ of the $k_i$-th symmetric
power representation $\mathrm{Sym}^{k_i}$ of $\GL_2(\C)$ into $\GL_{k_i+1}(\C)$ restricted to 
$B_{\tau_i}^\times$.  The twist here gives $\rho_{k_i}$ trivial central character.
Put $(\rho_\bfk, V_\bfk) = \bigotimes (\rho_{k_i}, V_{k_i})$.

We define the space $\calM_\bfk(\calO)$ of weight $\bfk$ quaternionic modular forms of level $\calO$ to be the space of functions
$\phi : \hat B^\times \times B^\times_\infty \to V_\bfk$
satisfying
\[ \phi(\gamma xu, \gamma yg) = \rho_\bfk(g^{-1}) \phi(x,y) \quad \text{ for } x \in \hat B^\times, y \in B^\times_\infty,  \gamma \in B^\times, u \in \hat \calO^\times, \, g \in B^\times_\infty. \]
Alternatively, $\calM_\bfk(\calO)$ is the space of functions on
$B^\times \bs B^\times(\A) / \hat \calO^\times$ on which $B_\infty^\times$ acts
on the right by $\rho_\bfk$.
We note that a consequence of our assumption $h_F^+ = 1$ is that all forms
in $\calM_\bfk(\calO)$ are invariant under translation by the center $\A_F^\times$
of $B^\times(\A)$.  Without this assumption, 
we could restrict to the subspace of
forms with trivial central character as in \cite{me:cong}.

For the invariance conditions on $\phi$ to be compatible with the transformation condition on $B_\infty^\times$, it is necessary and sufficient that
$\phi(x,1) \in V_\bfk^{\Gamma(x)}$, where $\Gamma(x) = x \hat \calO^\times x^{-1}
\cap B^\times$.
Write 
\[ \Cl(\calO) = \{ x_1, \ldots, x_h \} \]
for some fixed choice of $x_1, \ldots, x_h$ in $B^\times(\A)$.
Put $\Gamma_i = \Gamma(x_i)$.
Then we can and will view the elements $\phi \in \calM_\bfk(\calO)$ as precisely the set of functions
\begin{equation} \label{eq:qmf-def2}
 \phi : \Cl(\calO) \to \bigsqcup V_\bfk^{\Gamma_i}, \quad \phi(x_i) \in V_\bfk^{\Gamma_i} 
\text{ for } 1 \le i \le h.
\end{equation}
Namely, we can view $\phi$ as a function of $\hat B^\times$ by $\phi(x) := \phi(x,1)$.
Since $\Cl(\calO)$ is precisely the set of orbits of 
$B^\times \bs B^\times(\A) / \hat \calO^\times$ under $B^\times_\infty$, any
$\phi \in \calM_\bfk(\calO)$ is completely determined by its values on $x_1, \ldots, x_h$.  
Consequently $\calM_\bfk(\calO) \simeq \bigoplus V_\bfk^{\Gamma_i}$.

Note that viewing $\phi$ as a function of $\hat B^\times$ (which we do from now on
except where explicated), $\phi$ is invariant under $\hat F^\times = Z(\hat B^\times)$,
right $\hat \calO^\times$-invariant and transforms on the left by $\rho_\bfk$ under $B^\times$
since
\begin{equation} \label{eq:leftBact}
\phi(\gamma x) = \phi(\gamma x,1) = \phi(x, \gamma^{-1}) = \rho_{\bfk}(\gamma) \phi(x, 1)
= \rho_\bfk(\gamma) \phi(x), \quad \gamma \in B^\times.
\end{equation}

If $\bfk = \zero := (0, 0, \ldots, 0)$, then $\rho_\bfk$ is the trivial representation 
so $\calM_\bfk(\calO)$ is simply the set of functions $\phi : \Cl(\calO) \to \C$.
Here we define the Eisenstein subspace $\calE_\zero(\calO)$ to be the space of 
$\phi \in \calM_\zero(\calO)$ which factors through the reduced norm $N_{B/F}$.
By the assumption that $h_F^+ = 1$, $\calE_\zero(\calO) = \C \one$, where
$\one$ denotes the constant function on $\Cl(\calO)$.  (For general $F$,
$\calE_\zero(\calO)$ is $h_F^+$-dimensional.)

In this case, we can define a normalized inner product on  $\calM_{\zero}(\calO)$ to be
\begin{equation} \label{eq:IP}
 (\phi, \phi') = \sum \frac {|\frako_F^\times|}{|\Gamma_i|} \phi(x_i) \overline{\phi'(x_i)}.
\end{equation}
Then we define the cuspidal subspace
$\calS_\zero(\calO)$ of $\calM_\zero(\calO)$ to be the orthogonal complement of the
Eisenstein subspace: $\calM_\zero(\calO) = \calE_\zero(\calO) \oplus \calS_\zero(\calO)$.

If $\bfk \ne \zero$, then nothing nonzero in $\calM_\bfk(\calO)$ can factor through
$\A^\times_F$, so we put $\calE_\bfk(\calO) = 0$ and 
$\calS_\bfk(\calO) = \calM_\bfk(\calO)$.

%%%%%%%%%%%%%%%%%%%%%%%%%%%%%%%%
%
%  Hecke operators
%
%%%%%%%%%%%%%%%%%%%%%%%%%%%%%%%%

\subsection{Hecke operators}  \label{sec:hecke}
In this section, $g \in \hat B^\times$ and we view elements of $\calM_\bfk(\calO)$ as
functions on $\hat B^\times$ by \eqref{eq:qmf-def2}.

Fix a Haar measure $dg$ on $\hat B^\times$ which gives $\hat \calO^\times$
volume 1.  
For $\alpha \in \hat B^\times$, we
associate the Hecke operator $T_\alpha : \calM_\bfk(\calO) \to \calM_\bfk(\calO)$ given by
\begin{equation} \label{eq:hecke-def}
 (T_\alpha \phi)(x) = \int_{\hat \calO^\times \alpha
\hat \calO^\times} \phi(xg) \, dg.
\end{equation}

Writing $\hat \calO^\times \alpha \hat \calO^\times = \bigsqcup \beta_j \hat \calO^\times$
for some finite collection of $\beta_j \in \hat \calO^\times$, we can rewrite 
\eqref{eq:hecke-def} as the finite sum
\[ (T_\alpha \phi)(x) = \sum \phi(x \beta_j). \]

For $\frakp$ a prime of $F$, let $\varpi_\frakp$ denote a uniformizer in $F_\frakp$.
Then for $\frakp \nmid \frakN$, identify $B_\frakp^\times$ with $\GL_2(F_\frakp)$ and set 
$\alpha_\frakp = \bmx \varpi_\frakp & 0 \\ 0 & 1 \emx \in B_\frakp^\times$.  
For $\frakp | \frakN$, let $E_\frakp$ be the unramified quadratic extension of $F_\frakp$, 
write $B_\frakp = \{ \bmx x & \varpi_\frakp y \\ \bar x & \bar y \emx : x, y \in E_\frakp \}$ and
set $\alpha_\frakp = \varpi_{B_\frakp} = \bmx 0 & \varpi_\frakp \\ 1 & 0 \emx \in B_p^\times$.  Here we used
the notation $\varpi_{B_\frakp}$ to indicate that this element is a uniformizer for $B_\frakp$.

For any prime $\frakp$, let $T_\frakp = T_{\alpha_\frakp}$, where we view 
$\alpha_\frakp \in B_\frakp^\times$ as the
element $\beta = (\beta_v)_v \in \hat B^\times$ satisfying 
$\beta_v = \alpha_\frakp$ when $v=\frakp$ and $\beta_v = 1$ otherwise.
When $\frakp \nmid \frakN$, this definition agrees with the (suitably normalized)
definition of unramified Hecke operators for holomorphic Hilbert modular forms.

Suppose $\frakp | \frakN$.  Since $\calO_\frakp$ is the unique maximal order of $B_\frakp$, 
it is fixed under conjugation by $\alpha_\frakp = \varpi_{B_\frakp}$.  (In fact explicit calculation shows 
that conjugation by $\alpha_\frakp$ in $B_\frakp$ acts as the canonical involution of $B_\frakp$.) 
Consequently $\hat \calO^\times \alpha_\frakp \hat \calO^\times =
\varpi_{B_\frakp} \hat \calO^\times$, and the definition of the Hecke operator means
\begin{equation} \label{eq:ram-Tp}
 (T_\frakp \phi)(x) = \phi(x \varpi_{B_\frakp}), \quad \frakp | \frakN.
\end{equation}
Hence, for ramified primes, since $\varpi_{B_\frakp}^2 = \varpi_\frakp \in Z(\hat B^\times)$, we have $(T_\frakp^2 \phi)(x) = \phi(x \varpi_\frakp) = \phi(x)$, i.e.,
$T_\frakp$ acts on $\calM_\bfk(\calO)$ with order 2.

In this paper, we say $\phi \in \calM_\bfk(\calO)$ is an eigenform if it is an eigenfunction of 
\emph{all} $T_\frakp$'s.  Then $\calM_\bfk(\calO)$ has a basis of eigenforms as $(T_\frakp)_\frakp$ 
is a commuting family of diagonalizable operators.  Recall that this is not quite true for Hilbert (or 
elliptic) modular forms---rather, one either needs to restrict the definition of eigenforms to be 
eigenfunctions of the unramified Hecke operators or restrict to a subspace of newforms.  
In our quaternionic
situation, all eigenforms are ``new'' because we are working with a maximal order.  The 
diagonalizability of the ramified Hecke operators $T_\frakp$, $\frakp | \frakN$, 
follows from the fact that they are involutions.

Any eigenform $\phi \in \calM_\bfk(\calO)$ lies in an irreducible cuspidal automorphic
representation $\pi$ of $B^\times(\A)$ with trivial central character.  (Our definition of
cusp forms does not exactly match up with the usual notion of cuspidal automorphic
representations---the eigenform $\one \in \calM_\zero(\calO)$ is not a cusp form, and it
generates the trivial automorphic representation, which is a cuspidal representation of
$B^\times(\A)$ using the standard definition.  However, it will not correspond to a cuspidal
representation of $\GL_2(\A)$, which is why we do not call the form $\one$ a cusp form.)
At a ramified prime 
$\frakp$, the local representation $\pi_\frakp$ is 1-dimensional and factors through the 
reduced norm map $N_{B_\frakp/F_\frakp}$.  
Because we are working with trivial central character, either
$\pi_\frakp$ is the trivial representation $\one_\frakp$ or the reduced norm map composed with the unramified quadratic character $\eta_\frakp$ of $F_\frakp^\times$.  
Since $T_\frakp \phi = \pi(\varpi_{B_\frakp}) \phi$,
we see that $T_\frakp$ acts on $\phi$ by $+1$ (resp.\ $-1$) if $\pi_\frakp = \one_\frakp$
(resp.\ $\eta_\frakp \circ N_{B_\frakp/F_\frakp}$).

\subsection{The Jacquet--Langlands correspondence} \label{sec:JL}

The Jacquet--Langlands correspondence, proved in the setting of automorphic 
representations, gives an isomorphism:
\[ \calS_\bfk(\calO) \simeq S_{\bfk + \two}^\new(\frakN), \]
where $\two := (2, \ldots, 2) \in \mathbb N^d$.
This isomorphism respects the action of $T_\frakp$ for $\frakp \nmid \frakN$, i.e.,
it is an isomorphism of modules for the unramified Hecke algebra.  
(To get the right normalization of Hecke operators, 
we take the convention of viewing the space of Hilbert modular forms $M_{\bfk}(\frakN)$
adelically and defining the Hecke operators analogously to \eqref{eq:hecke-def}.)

Let $\mathrm{St}_\frakp$ denote the Steinberg representation of $\GL_2(F_\frakp)$.
For $\frakp | \frakN$, the Atkin--Lehner operator $W_\frakp$ acts on an eigenform
$f \in \calS_{\bfk + \two}^\new(\frakN)$ with eigenvalue $-1$ (resp.\ $+1$)  
if the associated local representation $\pi_{f,\frakp}$ is $\mathrm{St}_\frakp$ (resp.\ $\mathrm{St}_\frakp \otimes \eta_\frakp$).  In fact, we can take this to be the
definition of the Atkin--Lehner operator on the space of Hilbert modular newforms of 
squarefree level. (See \cite{shemanske-walling} for a more classical approach to Atkin--Lehner 
operators for Hilbert modular forms.) A standard computation shows that
the (normalized) ramified Hecke eigenvalue $a_\frakp(f) = - w_\frakp(f)$, i.e.,
$T_\frakp = -W_\frakp$ for $\frakp | \frakN$.

Since the local Jacquet--Langlands correspondence 
associates $\one_\frakp$ with
$\mathrm{St}_\frakp$ and $\eta_\frakp \circ N_{B_\frakp/F_\frakp}$ with
$\mathrm{St}_\frakp \otimes \eta_\frakp$, we see that the action of the ramified
Hecke operators $T_\frakp$ on $\calS_\bfk(\calO)$ corresponds to the
action of $T_\frakp = -W_\frakp$ on $\calS_{\bfk + \two}^\new(\frakN)$ under the 
Jacquet--Langlands correspondence.  This can be viewed as a 
representation-theoretic generalization of the relationship between the Fricke
involution on the space of weight 2 elliptic cusp forms and quaternionic theta
series given by Pizer \cite{pizer}.

While the Jacquet--Langlands correspondence is technically only a correspondence of cusp forms
(or rather, cuspidal representations which are not 1-dimensional), we can extend the above 
Hecke module isomorphism to include all of $\calM_\bfk(\calO)$.  

Namely, it suffices to assume $\bfk=\zero$, so $\calM_\bfk(\calO)$
is just the space of $\C$-valued functions on $\Cl(\calO)$.  
Then $\calE_\zero(\calO) = \C \one$, and
the $\frakp$-th eigenvalue of $\one \in \calE_\zero(\calO)$ 
is simply the degree of $T_\frakp$, i.e., $1+N(\frakp)$ if $\frakp \nmid \frakN$
or 1 if $\frakp | \frakN$.   There is an Eisenstein series $E_{\two, \frakN} \in M_\two(\frakN)$
with these same Hecke eigenvalues for all $\frakp$.  When $F=\Q$, we may take
$E_{2,N} := \sum_{d|N} \mu(d) d E_2(dz)$ where $E_2$ is the quasimodular weight 2 
Eisenstein series for $\mathrm{SL}_2(\Z)$ and $\mu$ is the M\"obius function. 
Thus when $\bfk = \zero$, we can extend
the above Hecke module isomorphism of cuspidal spaces to a Hecke module isomorphism:
\[ \calM_\zero(\calO) \simeq \C E_{\two, \frakN} \oplus
\calS_{\two}^\new(\frakN). \]
We take $w_\frakp(E_{\two, \frakN}) = -a_\frakp(E_{\two, \frakN}) = -1$ for all $\frakp | \frakN$.

We remark that for general $h_F^+$, the reduced norm map from 
$B$ to $F$ induces a surjective map $N_{B/F} : \Cl(\calO) \to \Cl^+(\frako_F)$,
and a basis of eigenforms for $\calE_\zero(\calO)$ is just the collection of maps 
$\lambda \circ N_{B/F}$ where $\lambda$ ranges over characters of $\Cl^+(\frako_F)$.
We can still extend the Jacquet--Langlands correspondence to all of 
$\calM_\zero(\calO)$ by associating $\lambda \circ N_{B/F}$ to
$E_{\two, \frakN} \otimes \lambda$.

\subsection{Relation with quaternionic $S$-ideal classes} \label{sec:33}

Let $\frakM$ be an integral ideal dividing $\frakN$, which we just
write as $M$ when $F=\Q$.
By a sign pattern $\chi = \chi_\frakM$ for $\frakM$, we mean a collection of
signs $\chi_\frakp \in \{ \pm 1 \}$ for all prime ideals $\frakp | \frakM$.
If $\chi_\frakp = +1$ (resp.\ $-1$) for all $\frakp | \frakM$, we denote the sign pattern by
$+_\frakM$ (resp.\ $-_\frakM$).  Also, if $\chi$ is a sign pattern for $\frakM$,
denote by $-\chi$ the sign pattern given by signs $-\chi_\frakp$ for all 
$\frakp | \frakM$.

Consider the subspace of $\calM_\bfk(\calO)$ with this
collection of Hecke signs:
\[ \calM_\bfk^\chi(\calO) = \langle \phi \in \calM_\bfk(\calO) \text{ is an eigenform} : T_\frakp \phi = \chi_\frakp \phi
\text{ for all } \frakp | \frakM \rangle. \]
Similarly we define $\calS_\bfk^\chi(\calO) = \calM_\bfk^\chi(\calO) \cap 
\calS_\bfk(\calO)$.  
Note that
$\calM_\bfk^\chi(\calO) = \calS_\bfk^\chi(\calO) \oplus \C \one$ if $\bfk = \zero$ and
$\chi = +_\frakM$; otherwise $\calM_\bfk^\chi(\calO) = \calS_\bfk^\chi(\calO)$.  

To keep notation consistent with \cite{me:dim} when $F=\Q$, we denote
the space of Hilbert newforms with fixed Atkin--Lehner (rather than Hecke)
signs by
\[ S_\bfk^{\new,\eps}(\frakN) = \langle f \in S_\bfk(\frakN) \text{ is a newform} : W_\frakp f = \eps_\frakp f \text{ for all } \frakp | \frakM \rangle, \]
for a sign pattern $\eps$ for $\frakM$.  The description of the Jacquet--Langlands
correspondence above tells us we have Hecke module isomorphisms:
\begin{equation} \label{eq:JLdim}
\calS_\bfk^\chi(\calO) \simeq S_{\bfk+\two}^{\new, -\chi}(\frakN),
\end{equation}
and
\begin{equation} \label{eq:JLdim2}
\calM_\bfk^\chi(\calO) \simeq 
\begin{cases}
\C E_{2,\frakN} \oplus 
S_{\two}^{\new, -\chi}(\frakN) & \text{if } \bfk = \zero \text{ and } \chi = +_\frakM,
\\
S_{\bfk+\two}^{\new, -\chi}(\frakN) & \text{else}.
\end{cases}
\end{equation}

If $\phi \in \calM_\bfk^{\chi}(\calO)$, then it is 
right $B_\frakp^\times$-invariant (i.e., $\phi(x \alpha_\frakp) = \phi(x)$ for all
$\alpha_\frakp \in B_\frakp^\times$) if and only if $\chi_\frakp = +1$.  This implies 
we can view forms in $\calM_\bfk^{+_\frakM}(\calO)$ as certain functions on $\Cl_S(\calO)$.  In particular, for weight zero we see that
\begin{equation} \label{eq:M0+}
\calM_\zero^{+_\frakM}(\calO) \simeq \{ \phi : \Cl_\frakM(\calO) \to \C \}.
\end{equation}
Hence
\begin{equation} \label{eq:hBS-dim}
 h_{B,\frakM} = \dim \calM_\zero^{+_\frakM}(\calO) = 1 + \dim S_\two^{\new, -_\frakM}(\frakN).
\end{equation}

We remark that when $F=\Q$ and $N=p$, we
have $h_{B,p} = t_B$ so \eqref{eq:hBS-dim} yields
$t_B = 1 + S_2^{\new,-_p}(p)$, which was already known to Deuring.  
More generally, but still with $F=\Q$, a relation between
type numbers and the full (not new) space of cusp forms with given Atkin--Lehner eigenvalues
was given by Hasegawa and Hashimoto \cite{hasegawa-hashimoto}, which is similar to, but slightly
different than, \eqref{eq:hBS-dim}.  Note they do not restrict to squarefree level, and 
their approach is essentially to use
explicit formulas for type numbers and dimensions, rather than looking through 
the lens of the Jacquet--Langlands correspondence as we do here.

When $F=\Q$, a formula for $\dim S_k^{\new, \eps}(N)$ was given in \cite{me:dim},
This translates into an explicit formula for the $S$-ideal class numbers $h_{B,S}$ by
\eqref{eq:hBS-dim}.  The general case is somewhat complicated, so here we just
explain the formula in a simple case which will arise for us later: when 
$S = \{ p \}$, we have $h_{B, p} = \frac 12 h_B = \frac 12 (1 + \dim S_2^\new(N))$ if 
(and only if) $p$ satisfies condition (a), (b), or (c) of \cref{prop:55} below.  

In the next section, we will generalize \eqref{eq:M0+}
to treat spaces $\calM^\chi_\bfk(\calO)$ of higher weight and other sign patterns $\chi$.

%%%%%%%%%%%%%%%%%%%%%%%%%%%%%%%%
%%%%%%%%%%%%%%%%%%%%%%%%%%%%%%%%
%
%  Action of local involutions
%
%%%%%%%%%%%%%%%%%%%%%%%%%%%%%%%%
%%%%%%%%%%%%%%%%%%%%%%%%%%%%%%%%

\section{Action of local involutions} \label{sec:action}

Keep the notation of the previous section.  Here, for a prime $\frakp$ at which
$B$ is ramified, we will study the action of $\varpi_{B_\frakp}$ on $\Cl(\calO)$.
This will give a ``local involution'' $\sigma_\frakp$ on the global space $\Cl(\calO)$,
which by \eqref{eq:ram-Tp} will tell us about the action of ramified Hecke operators on
$\calM_\bfk(\calO)$.    This will result in an algebro-combinatorial description of the
spaces $\calM_\bfk^\chi(\calO)$ for prescribed sign patterns $\chi$.

\subsection{Action on ideal classes} \label{sec41}
Let $\frakp$ be a prime at which $B$ ramifies.
For $S = \{ \frakp \}$, we also write $\Cl_S(\calO)$ as $\Cl_\frakp(\calO)$.
Now we have a surjective map
\begin{equation} \label{eq:cltoclp}
 \Cl(\calO) \to \Cl_\frakp(\calO)
\end{equation}
given by quotienting out on the right by $B_\frakp^\times$.   Since 
$B_\frakp^\times = F_\frakp^\times (\calO_\frakp^\times \sqcup \varpi_{B_\frakp} \calO_\frakp^\times)$,
given any 
$x \in \hat B^\times$ the associated $\{ \frakp \}$-ideal class 
$[x]_\frakp := B^\times x \hat \calO^\times B_\frakp^\times$
is either $[x]$ or $[x] \sqcup [x \varpi_{B_p}]$, where $[x] := B^\times x \hat \calO^\times$.
Thus the map \eqref{eq:cltoclp} has fibers of size 1 or 2.

Put another way, right multiplication by $\varpi_{B_\frakp}$ induces an involution, i.e.\ a 
permutation of order 2, on $\Cl(\calO)$, and the orbits of this involution are precisely the
fibers of \eqref{eq:cltoclp}.  Denote this involution by $\sigma_\frakp$, so 
$\sigma_\frakp([x]) = [x \varpi_{B_\frakp}]$ for any $x \in \hat B^\times$. 

It will be useful to know certain objects associated to ideal classes are
invariant under $\sigma_\frakp$.

For a right ideal $\calI$ of $\calO$, let 
$\calO_l(\calI) = \{ \alpha \in B : \alpha \calI \subset \calI \}$ denote the left order of $\calI$.
If $\calI$ corresponds to $x$, we also write the left order as
$\calO_l(x)$.  Note $x \hat \calO x^{-1} \cap B$ is a maximal order of $B$ since it 
locally is.  Since it preserves $x \hat \calO$ by left multiplication, we have 
$\calO_l(x) = x \hat \calO x^{-1} \cap B$.  From this it is easy to see that
$\calO_l(x) = \calO_l(x')$ for $x' \in [x]$, so we may unambiguously call this
the left order $\calO_l([x])$ of the ideal class $[x]$.
Similarly, since $\Gamma(x) = \calO_l(x)^\times$, this group only depends on
$[x]$ and we may also write it as $\Gamma([x])$.

\begin{lem}  \label{lem:41}
For $x \in \hat B^\times$, $\calO_l([x]) = \calO_l(\sigma_\frakp([x]))$
and $\Gamma([x]) = \Gamma(\sigma_\frakp([x]))$.
\end{lem}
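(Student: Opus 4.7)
The plan is to reduce both equalities to a single statement about normalization: namely, that the global adelic maximal order $\hat\calO$ is preserved by conjugation by the element $\varpi_{B_\frakp}$ (viewed as an idele with trivial components away from $\frakp$). Once that is in hand, both claims fall out of the characterizations of $\calO_l(x)$ and $\Gamma(x)$ as intersections involving $x \hat\calO x^{-1}$ with $B$.

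First I would recall, as already noted in the paper just before the definition of $\calO_l([x])$, that $\calO_l(x) = x \hat\calO x^{-1} \cap B$ and that $\Gamma(x) = x \hat\calO^\times x^{-1} \cap B^\times = \calO_l(x)^\times$. In particular, to prove both parts of the lemma it is enough to show the adelic equality
\[
x \varpi_{B_\frakp} \hat\calO \varpi_{B_\frakp}^{-1} x^{-1} = x \hat\calO x^{-1},
\]
since intersecting with $B$ (resp.\ $B^\times$) then gives the statements for $\calO_l$ and $\Gamma$ simultaneously.

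Next I would verify that $\varpi_{B_\frakp}$ normalizes $\hat\calO$. Since $\varpi_{B_\frakp}$ has trivial components outside $\frakp$, the only nontrivial check is local at $\frakp$: I need $\varpi_{B_\frakp} \calO_\frakp \varpi_{B_\frakp}^{-1} = \calO_\frakp$. This is precisely the point already invoked in Section 3.1 just before equation \eqref{eq:ram-Tp}: because $B_\frakp$ is a division algebra, $\calO_\frakp$ is the unique maximal order of $B_\frakp$, so every element of $B_\frakp^\times$ (in particular $\varpi_{B_\frakp}$) normalizes it. Thus $\varpi_{B_\frakp} \hat\calO \varpi_{B_\frakp}^{-1} = \hat\calO$, and the displayed adelic equality follows.

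Having established the adelic equality, the lemma is immediate: intersecting with $B$ gives $\calO_l(x\varpi_{B_\frakp}) = \calO_l(x)$, which, since $\sigma_\frakp([x]) = [x\varpi_{B_\frakp}]$ and $\calO_l$ is a class invariant, yields $\calO_l([x]) = \calO_l(\sigma_\frakp([x]))$; taking unit groups gives $\Gamma([x]) = \Gamma(\sigma_\frakp([x]))$. There is no real obstacle here; the only thing to be careful about is keeping track of what is an adelic statement and what descends to the global level, and making explicit use of the uniqueness of the maximal order in the local division algebra $B_\frakp$.
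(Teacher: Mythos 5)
Your proposal is correct and follows essentially the same route as the paper: reduce both statements to the fact that $\varpi_{B_\frakp}$ normalizes $\hat\calO$ (trivially away from $\frakp$, and at $\frakp$ by uniqueness of the maximal order in the local division algebra $B_\frakp$), then intersect with $B$ and take unit groups. Nothing is missing.
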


\begin{proof} 
It suffices to prove the statement about left orders.
By the above adelic description of left orders,
 it suffices to show $\hat \calO^\times = \varpi_{B_\frakp}
\hat \calO^\times \varpi_{B_\frakp}^{-1}$.  Clearly these groups are the same away 
from $\frakp$, and they are the same at $\frakp$ since $B_\frakp$ has a unique maximal 
order.
\end{proof}

In this subsection, we needed to distinguish between $x$, $[x]$ and $[x]_\frakp$ for $x \in
\hat B^\times$, but below this is less crucial so we will use $x_i$ for
an both element of $\Cl(\calO)$ and a representative in $\hat B^\times$ as in \cref{sec:qmfs}.

\subsection{Action on quaternionic modular forms}

Fix a set of representatives $x_1, \ldots, x_h$ for $\Cl(\calO)$ and let
$\frakp | \frakN$.  Then we may view $\sigma_\frakp$ as a permutation on 
$\{ x_1, \ldots, x_h \}$.
Writing $\sigma_\frakp(x_i) = \gamma x_i \varpi_{B_\frakp} u$ for some
$\gamma \in B^\times$, $u \in \hat \calO^\times$, then by \eqref{eq:leftBact} we see
\[ \phi(\sigma_\frakp(x_i)) = \phi(\gamma x_i \varpi_{B_\frakp}) =
\rho_\bfk(\gamma) \phi(x_i \varpi_{B_\frakp}). \]
Note that $\gamma^{-1} \in \Gamma^{\sigma_\frakp}(x_i) := 
x_i \varpi_{B_\frakp} \hat \calO^\times \sigma_\frakp(x_i)^{-1} \cap B^\times.$
Thus the ramified Hecke action in \eqref{eq:ram-Tp} can be rewritten as
\begin{equation} \label{eq:ram-Tp2}
 (T_\frakp \phi)(x_i) = \rho_\bfk(\gamma) \phi(\sigma_\frakp(x_i)), \quad \text{for some } \gamma \in \Gamma^{\sigma_\frakp}(x_i),\text{ for all } 1 \le i \le h.
\end{equation}
We remark that for any fixed $\gamma_0  \in \Gamma^{\sigma_\frakp}(x_i)$, we 
can write any $\gamma \in \Gamma^{\sigma_\frakp}(x_i)$ as $\gamma = \gamma_0 \gamma'$ where $\gamma' \in \Gamma(\sigma_\frakp(x_i))$.  Hence
if the equation in \eqref{eq:ram-Tp2} holds for a fixed $i$ and some $\gamma 
\in \Gamma^{\sigma_\frakp}(x_i)$, it holds for all such $\gamma$ for that $i$ by
\eqref{eq:qmf-def2}.

Now let $\chi$ be a sign pattern for some $\frakM | \frakN$, and let
$\gamma_{i, \frakp} \in \Gamma^{\sigma_\frakp}(x_i)$ for each $1 \le i \le h$,
$\frakp | \frakM$.  Then for $\phi \in \calM_\bfk(\calO)$, we see that $\phi \in 
\calM_\bfk^\chi(\calO)$ if and only if
\begin{equation} \label{eq:chi-cond1}
 \phi(x_i) = \chi_\frakp \rho_\bfk(\gamma_{i,\frakp}) \phi(\sigma_\frakp(x_i)), \quad
 \text{for } 1 \le i \le h, \, \frakp | \frakM.
\end{equation}
In the case $\bfk=\zero$ so $\rho_\bfk$ is trivial, \eqref{eq:chi-cond1} simply
becomes 
\begin{equation}\label{eq:chi-cond2}
\phi(x_i) = \chi_\frakp \phi(\sigma_\frakp(x_i)), \quad \text{for } 1 \le i \le h, \, \frakp |
\frakM.
\end{equation}

If $\sigma_\frakp(x_i) = x_i$, put $V_\bfk^{\Gamma_i, \chi_\frakp} =
\{ v \in V_\bfk^{\Gamma_i} : \rho_\bfk(\gamma_{i,\frakp})v = \chi_\frakp v \}$.
Note that in this case $\gamma_{i,\frakp}^2 \in Z(B^\times)$, so $\gamma_{i,\frakp}$
acts as an involution and we have $V_\bfk^{\Gamma_i} \simeq V_\bfk^{\Gamma_i, +_\frakp}
\oplus V_\bfk^{\Gamma_i, -_\frakp}$.
If $x_i$ is not fixed by $\sigma_\frakp$, put $V_\bfk^{\Gamma_i, \chi_\frakp} = V_\bfk^{\Gamma_i}$.

\begin{lem} \label{lem:wtk} Fix $\chi_\frakp$ a sign for some $\frakp | \frakN$.  
Order $x_1, \ldots, x_h$ so that $x_1, \dots, x_t$ is a set of representatives for
$\Cl_\frakp(\calO)$, where $t = h_{B, \frakp}$.
Then we have an
isomorphism
\[ \calM_\bfk^{\chi_\frakp}(\calO) \simeq \{ \phi : \Cl_\frakp(\calO) \to 
\bigsqcup V_\bfk^{\Gamma_i, \chi_\frakp} \, | \, \phi(x_i) \in V_\bfk^{\Gamma_i, \chi_\frakp}
\text{ for } 1 \le i \le t \}. \]
\end{lem}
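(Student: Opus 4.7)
The plan is to exhibit the isomorphism via the restriction map $\phi \mapsto \phi|_{\{x_1,\ldots,x_t\}}$. First I would observe that the fibers of $\Cl(\calO) \twoheadrightarrow \Cl_\frakp(\calO)$ are exactly the $\sigma_\frakp$-orbits, each of size one or two. After reordering, for $1 \le i \le t$ the orbit of $x_i$ is either $\{x_i\}$ (a fixed orbit) or $\{x_i,x_j\}$ for a unique $j>t$ with $\sigma_\frakp(x_i)=x_j$.

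Next I would unpack \eqref{eq:chi-cond1} orbit by orbit. For a fixed point $x_i$, \eqref{eq:chi-cond1} becomes $\rho_\bfk(\gamma_{i,\frakp})\phi(x_i)=\chi_\frakp\phi(x_i)$, equivalent to $\phi(x_i)\in V_\bfk^{\Gamma_i,\chi_\frakp}$. For a two-element orbit $\{x_i,x_j\}$, \eqref{eq:chi-cond1} at $i$ gives $\phi(x_j)=\chi_\frakp\rho_\bfk(\gamma_{i,\frakp})^{-1}\phi(x_i)$, so $\phi(x_j)$ is forced by $\phi(x_i)$ and no further constraint on $\phi(x_i)$ appears (consistent with $V_\bfk^{\Gamma_i,\chi_\frakp}=V_\bfk^{\Gamma_i}$ in this case). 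This yields at once that the restriction lands in the target space and is injective.

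For surjectivity, I extend given data $\phi(x_1),\ldots,\phi(x_t)$ by setting $\phi(x_j):=\chi_\frakp\rho_\bfk(\gamma_{i,\frakp})^{-1}\phi(x_i)$ for each two-orbit $\{x_i,x_j\}$ with $j>t$. Two points need checking: (a) $\phi(x_j)\in V_\bfk^{\Gamma_j}$, and (b) \eqref{eq:chi-cond1} also holds at $j$. For (a), writing $\gamma_{i,\frakp}=x_i\varpi_{B_\frakp}u_1x_j^{-1}$ and using the identity $\varpi_{B_\frakp}\hat\calO^\times\varpi_{B_\frakp}^{-1}=\hat\calO^\times$ from the proof of \cref{lem:41}, one computes $\Gamma_j=\gamma_{i,\frakp}^{-1}\Gamma_i\gamma_{i,\frakp}$, so $\rho_\bfk(\gamma_{i,\frakp})^{-1}$ intertwines $V_\bfk^{\Gamma_i}$ with $V_\bfk^{\Gamma_j}$ as needed.

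The main step is (b): combining \eqref{eq:chi-cond1} at $i$ and $j$ reduces it to $\rho_\bfk(\gamma_{i,\frakp}\gamma_{j,\frakp})\phi(x_i)=\phi(x_i)$. Multiplying adelic expressions and using $\varpi_{B_\frakp}^2=\varpi_\frakp\in F_\frakp^\times$ together with $\varpi_{B_\frakp}$ normalizing $\hat\calO^\times$ at $\frakp$, the global element $\delta:=\gamma_{i,\frakp}\gamma_{j,\frakp}\in B^\times$ satisfies $\delta x_i\in x_i\varpi_\frakp\hat\calO^\times$. Since $\phi$ is right $\hat\calO^\times$-invariant and invariant under $\hat F^\times$-translation (a consequence of $h_F^+=1$ plus the trivial central character of $\rho_\bfk$), we obtain $\rho_\bfk(\delta)\phi(x_i)=\phi(\delta x_i)=\phi(x_i\varpi_\frakp)=\phi(x_i)$, closing the consistency check. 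Together with (a) this gives surjectivity and completes the proof. The only genuinely nontrivial point is this final consistency calculation, which ultimately rests on $\varpi_{B_\frakp}^2$ being central.
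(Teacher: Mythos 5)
Your argument is correct, and it reaches the isomorphism by a genuinely different closing step than the paper. The common part: both you and the paper pass between functions on $\Cl(\calO)$ and data on $x_1,\ldots,x_t$ along the $\sigma_\frakp$-orbits, and your point (a) is exactly what the paper extracts from \cref{lem:41} (the identity $\varpi_{B_\frakp}\hat\calO^\times\varpi_{B_\frakp}^{-1}=\hat\calO^\times$ makes $\gamma_{i,\frakp}$ conjugate $\Gamma_i$ to $\Gamma_j$, so the extended value lands in $V_\bfk^{\Gamma_j}$). The divergence is in surjectivity. The paper defines the extension map $A(\chi_\frakp)\to\calM_\bfk^{\chi_\frakp}(\calO)$ and then counts dimensions: using $V_\bfk^{\Gamma_i}=V_\bfk^{\Gamma_i,+_\frakp}\oplus V_\bfk^{\Gamma_i,-_\frakp}$ at fixed points, it shows $\dim A(+_\frakp)+\dim A(-_\frakp)=\dim\calM_\bfk(\calO)$, forcing both embeddings to be onto; no explicit eigenform verification is written out. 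You instead prove bijectivity of restriction directly, and the substance is your consistency check (b): that \eqref{eq:chi-cond1} at the second vertex of a two-element orbit reduces to $\rho_\bfk(\gamma_{i,\frakp}\gamma_{j,\frakp})\phi(x_i)=\phi(x_i)$, which you verify from $\gamma_{i,\frakp}\gamma_{j,\frakp}\,x_i\in x_i\varpi_\frakp\hat\calO^\times$ together with right $\hat\calO^\times$-invariance and central invariance (the $h_F^+=1$ point); this is in essence the involutivity $T_\frakp^2=1$ from \cref{sec:hecke}, resting on $\varpi_{B_\frakp}^2=\varpi_\frakp$, in the same spirit as the remark following \eqref{eq:ram-Tp2} that the choice of $\gamma\in\Gamma^{\sigma_\frakp}(x_i)$ is immaterial. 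What each buys: your route is constructive and makes explicit a point the paper's proof takes for granted, namely that its extension lands in $\calM_\bfk^{\chi_\frakp}(\calO)$ and not merely in $\calM_\bfk(\calO)$ (the dimension count needs this), and it avoids any dimension bookkeeping; the paper's count is shorter on the page, treats both signs simultaneously, and isolates the fixed-point splitting $V_\bfk^{\Gamma_i,\pm_\frakp}$, which is the structural input reused later (e.g.\ in \eqref{sp-dim} and \cref{lem:equidist}).
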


\begin{proof}
Let $\phi$ be an element of the set on the right, which we 
temporarily denote by $A(\chi_\frakp)$.  Then we extend $\phi$ to
$\Cl(\calO)$ as follows: for $t < j \le h$, write $x_j = \sigma_{\frakp}(x_i)$ for some
$1 \le i \le t$, and put $\phi(x_j) = \chi_\frakp \rho_\bfk(\gamma_{j,\frakp}) \phi(x_i)$.
Note that $\phi(x_j) \in V_\bfk^{\Gamma_j}$ by \cref{lem:41}.
This defines an embedding of $A(\chi_\frakp)$ into $\calM_\bfk^{\chi_\frakp}(\calO)$.
We will show surjectivity by a dimension argument.  

For $1 \le i \le t$, let $A_i(\chi_\frakp)$ be the subspace of $A(\chi_\frakp)$ consisting of elements
$\phi$ such that $\phi(x_j) = 0$ if $i \ne j$, $1 \le j \le t$.  If $\sigma_\frakp$ fixes
$x_i$, then  $V_\bfk^{\Gamma_i} \simeq V_\bfk^{\Gamma_i, +_\frakp}
\oplus V_\bfk^{\Gamma_i, -_\frakp}$ implies $\dim A_i(+_\frakp) + \dim A_i(-_\frakp)
= \dim V_\bfk^{\Gamma_i}$.  Otherwise $\sigma_\frakp(x_i) = x_j$ for some $j > t$,
and $\dim A_i(+_\frakp) = \dim A_i(-_\frakp) = \dim V_\bfk^{\Gamma_i} = \dim V_\bfk^{\Gamma_j}$.  Hence
\[ \dim A(+_\frakp) + \dim A(-_\frakp) = \sum_{i=1}^h \dim V_\bfk^{\Gamma_i}
= \dim \calM_\bfk(\calO), \]
and thus our embedding of $A(\chi_\frakp)$ into $\calM_\bfk^{\chi_\frakp}(\calO)$
must be surjective.
\end{proof}

There are two situations where the above description of $\calM_\bfk^{\chi_\frakp}(\calO)$ becomes simpler.  First, if $\sigma_\frakp$ has no fixed points, then we can identify this
space of forms with the functions $\phi$ on $\Cl_\frakp(\calO)$ such that 
$\phi(x_i) \in V_\bfk^{\Gamma_i}$ for each $1 \le i \le t$.  Second, if $\bfk = \zero$ then
we can identify this space with functions $\phi : \Cl_\frakp(\calO) \to \C$ such that
$\phi(x_i) = 0$ if $\sigma_\frakp(x_i) = x_i$ and $\chi_\frakp = -1$.

\subsection{Actions without fixed points}

Let $s_\frakp$ denote the number of orbits of size 2 for $\sigma_\frakp$, so $h-2s_\frakp$
is the number of fixed points of $\sigma_\frakp$.
For $\phi \in \calM_\zero(\calO)$, note the equation
$T_\frakp \phi = \phi$ imposes $s_\frakp$ linear constraints on $\phi$: 
$\phi(x_i) = \phi(\sigma_\frakp(x_i))$
for $x_i$ in any orbit of size 2.  On the other hand, $T_\frakp \phi = -\phi$ forces
$\phi(x_i)=0$ for any $x_i$ fixed by $\sigma_\frakp$ and $\phi(x_i) = -\phi(\sigma_\frakp(x_i))$ for
$x_i$ in an orbit of size 2.  Hence for a sign pattern $\chi_\frakp$ for $\frakp$, we 
have
\begin{equation} \label{sp-dim}
\dim \calM_\zero^{\chi_\frakp}(\calO) = 
\begin{cases}
h - s_\frakp & \chi_\frakp = +1 \\
s_\frakp & \chi_\frakp = -1.
\end{cases}
\end{equation}
Consequently, we can compute $s_\frakp$ from \eqref{eq:JLdim2} and
a dimension formula for $S_2^{\new, -\chi_\frakp}(\frakN)$.
In particular, $\sigma_\frakp$ acts without fixed points if and only if
\[ \dim S_2^{\new, +_\frakp}(\frakN) =\dim S_2^{\new, -_\frakp}(\frakN) + 1. \]

Now we assume $F=\Q$, and will use a trace formula for the Atkin--Lehner operator
$W_p$ on $S_2^\new(N)$ from \cite{me:dim}
to give necessary and sufficient criteria for $\sigma_p$ to act on $\Cl(\calO)$ without
fixed points, which is equivalent to $s_p = \frac h2$.  

\begin{lem} \label{lem:fixpt}
Let $p | N$.  

(a) For $p > 2$, $s_p = \frac h2$ if and only if ${-p \leg q} = 1$ for some odd prime
$q | N$ or if $N$ is even and $p \equiv 7 \mod 8$.

(b) For $p = 2$, $s_p = \frac h2$ if and only if $N$ is divisible by a prime which is $1 \mod 4$
and ${-2 \leg q} = 1$  for some prime $q | N$.
\end{lem}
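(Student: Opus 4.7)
The plan is to translate the combinatorial condition $s_p = h/2$ (no fixed points of $\sigma_p$) into a question about the trace of the Atkin--Lehner operator $W_p$ on $S_2^\new(N)$, and then invoke the trace formula from \cite{me:dim} to read off the arithmetic criteria.

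First I would combine the dimension formula \eqref{sp-dim} for $\calM_\zero^{\chi_p}(\calO)$ with the Jacquet--Langlands isomorphism \eqref{eq:JLdim2} (applied in weight $\bfk = \zero$). Under \eqref{eq:JLdim2}, $\dim \calM_\zero^{+_p}(\calO) = 1 + \dim S_2^{\new, -_p}(N)$ while $\dim \calM_\zero^{-_p}(\calO) = \dim S_2^{\new, +_p}(N)$; on the other hand \eqref{sp-dim} gives the difference $\dim \calM_\zero^{+_p}(\calO) - \dim \calM_\zero^{-_p}(\calO) = h - 2 s_p$. Comparing, one obtains the identity
\[
 h - 2 s_p \;=\; 1 \;-\; \bigl(\dim S_2^{\new, +_p}(N) - \dim S_2^{\new, -_p}(N)\bigr) \;=\; 1 - \tr\bigl(W_p \mid S_2^\new(N)\bigr).
\]
Since the number of fixed points of $\sigma_p$ on $\Cl(\calO)$ equals $h - 2s_p$, we conclude that $\sigma_p$ acts without fixed points, i.e.\ $s_p = h/2$, if and only if $\tr(W_p \mid S_2^\new(N)) = 1$.

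Next I would plug in the explicit trace formula for $W_p$ on $S_2^\new(N)$ from \cite{me:dim}. For $N$ squarefree, this trace decomposes (by inclusion--exclusion on new/oldforms) into a sum of local factors indexed by the primes dividing $N$, with the contribution at each prime $q \mid N$, $q \ne p$, encoding a quadratic symbol condition involving $-p$ (or, in residue characteristic $2$, a condition modulo $8$). Writing the trace as this product, the condition $\tr(W_p \mid S_2^\new(N)) = 1$ becomes an explicit arithmetic condition on the local factors.

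Finally I would case-split on $p$. For $p$ odd, the local factor at an odd prime $q \mid N/p$ contributes nontrivially (in the right way to force the total trace to reach $1$) exactly when $\left(\frac{-p}{q}\right) = 1$, while the factor at the prime $2$, when $2 \mid N$, contributes correctly exactly when $p \equiv 7 \pmod 8$; this gives part (a). For $p = 2$, one needs both a prime $q \mid N$ with $q \equiv 1 \pmod 4$ (to make one local factor trivial in the required way) and a prime $q' \mid N$ with $\left(\frac{-2}{q'}\right) = 1$ (to make another nontrivial); this yields part (b). The main obstacle is bookkeeping the local factors in the trace formula carefully enough to verify that these conditions are both sufficient and necessary, but given the product structure of the formula this reduces to routine case-checking once the formula is written down.
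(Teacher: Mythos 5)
Your proposal is correct and takes essentially the same route as the paper: it reduces $s_p = \frac h2$ to $\tr_{S_2^\new(N)} W_p = 1$ by comparing \eqref{sp-dim} with the Jacquet--Langlands dimension identities, and then reads off when the correction term in the trace formula of \cite{me:dim} vanishes. The only difference is that the paper writes out the explicit formulas (including the separate $p=3$ case and the constant $b(p,N')$ whose vanishing encodes the ``$p\equiv 7 \bmod 8$, $N$ even'' alternative), which is exactly the routine case-checking you defer.
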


\begin{proof} 
By \eqref{sp-dim}, $s_p = \frac h2$ if and only if
$\dim S_2^{\new, +_p} = 1 + \dim S_2^{\new, -_p}$, i.e., if and only if
$\tr_{S_2^\new(N)} W_p = 1$.  This trace is computed in \cite[Prop 1.4]{me:dim}.

Let $N' = N/p$.  For $m \in \mathbb N$, let $m_\odd=  2^{-v_2(m)}m$ be the odd part of $m$.  
We define a constant $b(p,N')$ by the following table:
\begin{center}
\begin{tabular}{c|c|c}
& $b(p, N')$ & $b(p, N')$ \\
$p \mod 8$ &  for $N'$ odd &  for $N'$ even \\
\hline
1, 2, 5, 6 & 1 & $-1$\\
3 & 4 & $-2$ \\
7 & 2 & 0
\end{tabular}
\end{center}

If $p > 3$, the trace of interest is
\begin{equation}
\tr_{S_2^\new(N)} W_p = 1 - \frac 12 |\Cl(\Q(\sqrt{-p}))| b(p, N') \prod_{q | N'_\odd} \left( {-p \leg q} - 1 \right).
\end{equation}
This is 1 if and only if the second term on the right is 0, which gives part (a) when $p > 3$.
If $p=3$, this trace is
\begin{equation}
\tr_{S_2^\new(N)} W_3 = 1 -  (-1)^{v_2(N')} \prod_{q | N'_\odd} \left( {-3 \leg q} - 1 \right).
\end{equation}
This finishes (a).

If $p=2$, this trace is
\begin{equation}
\tr_{S_2^\new(N)} W_2 =1 - \frac 12 \left( \prod_{q | N'} \left( {-2 \leg q} - 1 \right) + \prod_{q | N'} \left( {-1 \leg q} - 1 \right) \right).
\end{equation}
This gives (b).
\end{proof}

We remark that knowing the traces of the Atkin--Lehner operator $W_p$ on $S_2^\new(N)$
is the same as knowing the $S$-ideal class numbers $h_{B, p}$ together with $h$ (see
\eqref{eq:hBS-dim} and \cite{me:dim}), so one may view the above result
as an application of formulas for
$S$-ideal class numbers, i.e., an application of the refined dimension formulas for
$S_2^{\new,\eps}(N)$.

\subsection{Weight zero spaces}
To study the spaces $\calM_\bfk^\chi(\calO)$ in more detail, 
we need to understand how the involutions $\sigma_\frakp$ interact for
the various primes $\frakp | \frakM$.  It will be convenient to describe this in terms of a graph.
The general case is somewhat complicated, so here we treat weight zero before discussing
higher weights.

Fix an integral ideal $\frakM | \frakN$ and  a sign pattern $\chi$
for $\frakM$.  We associate to $\chi$ a (signed multi)graph $\Sigma_\chi$ as follows.
Let the vertex set of $\Sigma_\chi$ be $\Cl(\calO) = \{ x_1, \ldots, x_h \}$.  
For $\frakp | \frakM$,
let $E(\chi_\frakp)$ denote the set of signed edges 
$\{ \chi_\frakp \cdot (x_i, \sigma_\frakp(x_i)) \}$ where $x_i$ runs over a complete set of
representatives for the orbits of $\sigma_\frakp$.  (By signed edges, we mean weighted
edges, where the weights are $\pm 1$ according to whether $\chi_\frakp = \pm 1$.)
Then we let the edge set of $\Sigma_\chi$ be the disjoint union of the $E(\chi_\frakp)$'s.
In other words, to construct our graph $\Sigma_\chi$ on $\Cl(\calO)$,
for all $1\le i \le j \le h$ and $\frakp | \frakM$, we add an (undirected) edge 
between $x_i$ and $x_j$ with sign $\chi_\frakp$ if and only if
$x_j = \sigma_\frakp(x_i)$.  Note that $\Sigma_\chi$ may
have loops as well as multiple edges with the same or opposite signs.

Let $X_1, \ldots, X_t$ denote the (vertex sets of the) connected components of $\Sigma_\chi$.  
We note that $X_1, \ldots, X_t$ do not
depend upon $\chi$---the sign pattern only affects the signs of the edges in
$\Sigma_\chi$.  Moreover, $x_j$ lies in the connected component of $x_i$ if and only
if it lies in the orbit of $x_i$ under the permutation group generated by $\{ \sigma_\frakp :
\frakp | \frakM \}$.  By the description of $\sigma_\frakp$ in terms of \eqref{eq:cltoclp},
this is equivalent to $x_j$ lying in the same $S$-ideal class as $x_i$, where
$S = \{ \frakp : \frakp | \frakM \}$.  Hence, viewing the $S$-ideal classes as subsets
of $\Cl(\calO)$, we may write $\Cl_S(\calO) = \{ X_1, \ldots, X_t \}$, and we see 
$t = h_{B,S}$.

Let $E_i$ be the edge set for $X_i$ in $\Sigma_\chi$ and 
partition $E_i = E_i^+ \sqcup E_i^-$, where 
$E_i^{\pm}$ denotes the subset of edges with sign $\pm 1$.  We say $X_i$ 
is $\chi$-admissible if there is a partition $X_i = X_i^+ \sqcup 
X_i^-$ such that the set of edges in $E_i$ which connect a vertex in $X_i^+$ with a vertex in $X_i^-$ is precisely $E_i^-$.  In this case, we call the partition $X_i^+ \sqcup X_i^-$ $\chi$-admissible.  Note that if $\chi = +_\frakM$, then $X_i^+ = X_i$ and $X_i^- = \emptyset$ is
always a $\chi$-admissible partition of $X_i$.

Denote the set of $\chi$-admissible $X_i \in \Cl_S(\calO)$ by $\Cl_S(\calO)^{\chi-\adm}$.

\begin{prop} \label{prop:wt0}
Let $\chi$ be a sign pattern for $\frakM | \frakN$, 
$S = \{ \frakp : \frakp | \frakM \}$, and write $\Cl_S(\calO) = \{ X_1, \ldots, X_t \}$.  
Then we have an isomorphism
\[ \calM_\zero^\chi(\calO) \simeq \{ \phi : \Cl_S(\calO)^{\chi-\adm} \to \C \}. \]
\end{prop}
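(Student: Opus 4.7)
The plan is to rewrite the eigenform condition \eqref{eq:chi-cond2} as a local relation along the edges of the signed graph $\Sigma_\chi$, then solve it component by component.

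By \eqref{eq:chi-cond2}, $\phi \in \calM_\zero^\chi(\calO)$ if and only if along every edge of $\Sigma_\chi$ joining $x$ to $\sigma_\frakp(x)$ one has $\phi(x) = \chi_\frakp \phi(\sigma_\frakp(x))$. Since, by the discussion preceding the proposition, the connected components of $\Sigma_\chi$ are precisely the $S$-ideal classes $X_1, \ldots, X_t$, any such $\phi$ decomposes as a direct sum of its restrictions $\phi_i := \phi|_{X_i}$, and the task reduces to counting the degrees of freedom of $\phi_i$ on each component.

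Fix a component $X_i$ and a base vertex $x_0 \in X_i$. Any $x \in X_i$ is joined to $x_0$ by some path in $\Sigma_\chi$ whose edge signs multiply to some $\eps_x \in \{\pm 1\}$, and the edge relation forces $\phi_i(x) = \eps_x \phi_i(x_0)$. The crux of the argument, which I expect to be the main step, is the equivalence: a nonzero $\phi_i$ exists if and only if $X_i$ is $\chi$-admissible. For the forward direction, well-definedness of $\eps_x$ (independence of the chosen path) requires every cycle in $X_i$ to have sign product $+1$; the resulting map $x \mapsto \eps_x$ then yields a partition $X_i = X_i^+ \sqcup X_i^-$ in which an edge of sign $\chi_\frakp$ joining $x$ to $y$ satisfies $\eps_x \eps_y = \chi_\frakp$, so positive edges stay within parts and negative edges cross, giving a $\chi$-admissible partition. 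Conversely, given any $\chi$-admissible partition and any $c \in \C$, the function $\phi_i$ equal to $+c$ on $X_i^+$ and $-c$ on $X_i^-$ satisfies each edge relation $\phi(x) = \chi_\frakp \phi(y)$ by the definition of $\chi$-admissibility.

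Combining, each $\chi$-admissible component contributes one complex parameter and each non-admissible component contributes zero, yielding $\dim \calM_\zero^\chi(\calO) = |\Cl_S(\calO)^{\chi-\adm}|$ and the claimed isomorphism, realized explicitly by $\phi \mapsto \bigl(X_i \mapsto \phi(x_0^{(i)})\bigr)$ for chosen base vertices $x_0^{(i)}$ in the admissible components. The one subtlety worth noting concerns self-loops: a loop at $x_i$ arising from $\sigma_\frakp(x_i) = x_i$ with $\chi_\frakp = -1$ is a cycle of length one with sign product $-1$, which both forces $\phi(x_i) = 0$ and obstructs $\chi$-admissibility, keeping the two sides of the equivalence consistent.
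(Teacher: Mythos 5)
Your proof is correct and follows essentially the same route as the paper: decompose along the connected components of $\Sigma_\chi$ (which are the classes in $\Cl_S(\calO)$), show a nonzero function on a component exists precisely when the component is $\chi$-admissible via the partition $X_i^{\pm}$ by the sign of $\phi$ relative to a base value, and build the converse function as $\pm c$ on the two parts. Your path/cycle-sign formulation is just a more explicit phrasing of the paper's observation that the edge relations $\phi(x_{j_1})=\pm\phi(x_{j_2})$ propagate the value of $\phi$ across each component, so there is no substantive difference.
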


Note that when $\chi = +_\frakM$, every class in $\Cl_S(\calO)$ is $\chi$-admissible
so this gives \eqref{eq:M0+}.

\begin{proof}
Order $x_1, \ldots, x_h$ so that $x_i \in X_i$ for $1 \le i \le t$.
Let $\phi \in \calM_\zero^\chi(\calO)$.  By \eqref{eq:chi-cond2}, if $x_{j_1}$ are $x_{j_2}$ are vertices in $X_i$ connected by an edge with sign $\pm 1$, then $\phi \in \calM_\zero^\chi(\calO)$ means $\phi(x_{j_1}) = \pm \phi(x_{j_2})$.  Hence
the value of $\phi(x_j)$
is determined by $\phi(x_i)$ (namely, is $\pm \phi(x_i)$) whenever $x_j \in X_i$.  
This gives a map from
$\calM_\zero^\chi(\calO)$ into the space of functions on $\Cl_S(\calO)^{\chi-\adm}$ by
restricting $\phi$ to be a function on the elements $x_i$, $1 \le i \le t$,
such that  $X_i$ is $\chi$-admissible.

To show this map is a bijection, it suffices to show that for $1 \le i \le t$
there exists $\phi \in \calM_\zero^\chi(\calO)$ such that $\phi(x_i) \ne 0$ if and only if
$X_i$ is $\chi$-admissible.  If $\phi \in \calM_\zero^\chi(\calO)$ with
$\phi(x_i) \ne 0$, then the partition of
$X_i$ into the two sets $X_i^+ = \{ x_j \in X_i : \phi(x_j) = \phi(x_i) \}$ and
$X_i^{-} = \{ x_j \in X_i : \phi(x_j) = -\phi(x_i) \}$ is a $\chi$-admissible partition of $X_i$.
Conversely, if $X_i^+ \sqcup X_i^-$ is a $\chi$-admissible partition of $X_i$, then
we can define an element of $\phi \in \calM_\zero^\chi(\calO)$ by setting
$\phi(x_j) = \pm 1$ if $x_j \in X_i^\pm$ and $\phi(x_j) = 0$ if $x_j \not \in X_i$.
\end{proof}

Thus $\dim \calM_\zero^\chi(\calO)$ is the number of $\chi$-admissible classes
in $\Cl_S(\calO)$, which generalizes \eqref{eq:hBS-dim}.  For congruences applications,
we want to know more about which $X_i$ are admissible.  Clearly we have

\begin{cor} All $X_i \in \Cl_S(\calO)$ are $\chi$-admissible if and only if 
$\dim \calM_\zero^\chi(\calO) = \dim \calM_\zero^{+_\frakM}(\calO)$.
\end{cor}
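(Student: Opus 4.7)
The plan is to derive this as an immediate counting consequence of Proposition \ref{prop:wt0}. That proposition identifies $\calM_\zero^\chi(\calO)$ with the space of $\C$-valued functions on $\Cl_S(\calO)^{\chi-\adm}$, so taking dimensions yields
\[ \dim \calM_\zero^\chi(\calO) = |\Cl_S(\calO)^{\chi-\adm}|. \]
Applying the same equality to the sign pattern $+_\frakM$ and using the remark just before the statement, that every $X_i$ is $+_\frakM$-admissible (via the trivial partition $X_i^+ = X_i$, $X_i^- = \emptyset$), gives
\[ \dim \calM_\zero^{+_\frakM}(\calO) = |\Cl_S(\calO)^{+_\frakM-\adm}| = |\Cl_S(\calO)|. \]

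From here the corollary is essentially a tautology. Since $\Cl_S(\calO)^{\chi-\adm}$ is a subset of $\Cl_S(\calO)$, the two cardinalities are equal precisely when every class $X_i \in \Cl_S(\calO)$ is $\chi$-admissible. Thus the equality of dimensions $\dim \calM_\zero^\chi(\calO) = \dim \calM_\zero^{+_\frakM}(\calO)$ holds if and only if all $X_i$ are $\chi$-admissible.

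There is no real obstacle here; the whole content of the corollary has already been packaged into Proposition \ref{prop:wt0}, and this statement is just the contrapositive/equality case of that cardinality comparison. The only thing to be careful about is recording explicitly that $|\Cl_S(\calO)^{+_\frakM-\adm}| = |\Cl_S(\calO)|$, which is immediate from the definition of admissibility applied to a sign pattern all of whose entries are $+1$ (every edge is positive, so the partition with $X_i^- = \emptyset$ works for each component).
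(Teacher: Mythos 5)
Your proof is correct and matches the paper's (implicit) argument: the paper derives the corollary immediately from Proposition \ref{prop:wt0} via the same dimension count $\dim \calM_\zero^\chi(\calO) = |\Cl_S(\calO)^{\chi-\adm}|$ together with the fact that every class is $+_\frakM$-admissible.
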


It does not seem easy to say exactly what $\Sigma_\chi$ looks like in general, however we can
get some information from considering how the edge sets $E(\chi_\frakp)$ can interact for
various $\frakp$.

\begin{lem} If $\frakM = \frakp \frakM_0$ and $X \in \Cl_{\frakM_0}(\calO)$,
then there exists $X' \in \Cl_{\frakM_0}(\calO)$ such that $x_i \in X$ implies
$\sigma_\frakp(x_i) \in X'$.
\end{lem}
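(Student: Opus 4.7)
The plan is to define $X' := [\sigma_\frakp(x_1)]_{\frakM_0}$ for any choice of $x_1 \in X$ and show this class is independent of that choice; equivalently, that $\sigma_\frakp$ descends to a well-defined self-map of $\Cl_{\frakM_0}(\calO)$. The key identity needed is the commutation $u \varpi_{B_\frakp} \in \varpi_{B_\frakp} \hat\calO_{S_0}^\times$ for every $u \in \hat\calO_{S_0}^\times$, where $S_0 = \{\frakq : \frakq | \frakM_0\}$ and $\varpi_{B_\frakp}$ is used both for the local uniformizer in $B_\frakp$ and for the idele supported only at $\frakp$.

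I would take two representatives $x_1, x_2$ of the same class $X \in \Cl_{\frakM_0}(\calO)$, so that $x_2 = \gamma x_1 u$ for some $\gamma \in B^\times$ and $u \in \hat\calO_{S_0}^\times$, and compute
\[ x_2 \varpi_{B_\frakp} = \gamma x_1 (u \varpi_{B_\frakp}). \]
Granting the commutation $u \varpi_{B_\frakp} = \varpi_{B_\frakp} u'$ for some $u' \in \hat\calO_{S_0}^\times$, this immediately gives $[\sigma_\frakp(x_2)]_{\frakM_0} = [\sigma_\frakp(x_1)]_{\frakM_0}$, as desired. To verify the commutation I would argue place-by-place: since $\frakM | \frakN$ is squarefree, $\frakp \notin S_0$. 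At any place $v \ne \frakp$ the idele $\varpi_{B_\frakp}$ has trivial component, so there is nothing to check. At $\frakp$ the $\frakp$-component of $\hat\calO_{S_0}^\times$ is $\calO_\frakp^\times$, so the claim reduces to $\calO_\frakp^\times \varpi_{B_\frakp} = \varpi_{B_\frakp} \calO_\frakp^\times$, which is exactly the normalization statement $\varpi_{B_\frakp} \calO_\frakp^\times \varpi_{B_\frakp}^{-1} = \calO_\frakp^\times$ already used in \cref{lem:41} (a consequence of the uniqueness of the maximal order in the local division algebra $B_\frakp$).

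There is really no serious obstacle once one observes that $\calO_\frakp^\times$ is normalized by $\varpi_{B_\frakp}$; the whole argument is an unwinding of definitions. The one bookkeeping point is the role of the hypothesis $\frakp \notin S_0$, which is precisely what places the $\frakp$-component of $u$ in $\calO_\frakp^\times$ (where the normalization property applies and yields a nontrivial conclusion).
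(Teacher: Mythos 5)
Your proof is correct and is in substance the same as the paper's: the paper's argument comes down to the assertion that $x_j \varpi_{B_\frakp}$ stays in a single $\frakM_0$-class as $x_j$ ranges over $X$, which is precisely the commutation $u\varpi_{B_\frakp} \in \varpi_{B_\frakp}\hat\calO_{S_0}^\times$ for $u \in \hat\calO_{S_0}^\times$ that you verify place-by-place using that $\frakp \nmid \frakM_0$ and that $\varpi_{B_\frakp}$ normalizes $\calO_\frakp^\times$ (uniqueness of the maximal order in $B_\frakp$, as in \cref{lem:41}). The only cosmetic difference is that the paper first observes the fibers of $\Cl_{\frakM_0}(\calO) \to \Cl_\frakM(\calO)$ have size $1$ or $2$ and locates $X'$ within the fiber of $X$, whereas you define $X'$ directly as the $\frakM_0$-class of $\sigma_\frakp(x_1)$ and check it is independent of the choice of $x_1 \in X$.
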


\begin{proof}
The projection $\Cl_{\frakM_0}(\calO) \to \Cl_\frakM(\calO)$ has fibers of size 1 or 2.
If the fiber containing $X$ has size 1, the lemma is true with $X' = X$.  Otherwise,
let $X'$ be the other element in the fiber containing $X$.  Then there exists
$x_i \in X$ such that $\sigma_\frakp(x_i) \in X'$, i.e., $x_i \varpi_{B_\frakp} \in X'$.
One easily sees that this implies $x_j \varpi_{B_\frakp} \in X'$ for all 
$x_j \in X = B^\times x_i \hat \calO^\times \prod_{\frakq | \frakM_0} B_\frakq^\times$.
\end{proof}

Thus if we think of building $\Sigma_\chi$ in stages by adding the edge sets $E(\chi_\frakp)$ 
one prime at a time, we see that at each stage each connected component comprises exactly
one or two connected components from the previous stage.  Furthermore, if a connected
component is obtained by linking two connected components $X$ and $X'$ from the previous stage, then involution $\sigma_\frakp$ linking $X$ and $X'$ must be a bijection between
the set of right $\calO$-ideal classes in $X$ and those in $X'$.

Consequently, each connected component $X_i \in \Sigma_\chi$ has cardinality
$2^m$ for some $0 \le m \le 2^{\omega(\frakM)}$.

\subsection{Admissibility in higher weight}

Now we return to arbitrary weight $\bfk \in (2\Z_{\ge 0})^d$.

As before, let $\frakM | \frakN$ and put $S = \{ \frakp | \frakM \}$.  
Write $\Cl_S(\calO) = \{ X_1, \ldots, X_t \}$ and $\Cl(\calO) = \{ x_1, \ldots, x_h \}$
with $x_i \in X_i$ for $1 \le i \le t$.  For a sign pattern $\chi$ for $\frakM$, we say
$X_i$ is $\chi$-admissible in weight $\bfk$ if for any $v \in V_\bfk^{\Gamma_i}$ there
exists $\phi \in \calM_\bfk^\chi(\calO)$ such that $\phi(x_i) = v$.  By the proof of
\cref{prop:wt0}, being $\chi$-admissible in weight $\zero$ is just the notion of
$\chi$-admissible from the previous section.

If every $X_i$ is $\chi$-admissible in weight $\bfk$, then similar to previous sections to we get
an isomorphism
\begin{equation} \label{eq:Mk-ClS}
 \calM_\bfk^\chi(\calO) \simeq \{ \phi : \Cl_S(\calO) \to 
\bigsqcup V_\bfk^{\Gamma_i} \, | \, \phi(x_i) \in V_\bfk^{\Gamma_i}
\text{ for } 1 \le i \le t \}
\end{equation}
by simply restricting $\phi \in \calM_\bfk^\chi(\calO)$ to $x_1, \ldots, x_t$.
Without the admissibility condition, there is always an injection from the set on the
left to the set on the right, and we see that 
$\dim \calM_\bfk^\chi(\calO) = \sum_{i=1}^t \dim V_\bfk^{\Gamma_i}$ if
and only if each $X_i$ is $\chi$-admissible in weight $\bfk$.

\begin{lem} \label{lem:equidist}
Suppose $\dim \calM^\chi_\zero(\calO) = \dim \calM^{\chi'}_\zero(\calO)$ for any
choices of sign patterns $\chi, \chi'$ for $\frakM$.  Then for any $\bfk$, 
sign pattern $\chi$ for $\frakM$ and $X_i \in \Cl_S(\calO)$, we have 
that $X_i$ is $\chi$-admissible in weight $\bfk$.  Moreover, for fixed
$\bfk$, the spaces $\calM^\chi_\bfk(\calO)$ have the same dimension for all $\chi$.
\end{lem}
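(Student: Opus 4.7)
The plan is to translate the weight-zero hypothesis into freeness of a natural group action on $\Cl(\calO)$, and then lift admissibility to arbitrary weight $\bfk$. The commuting involutions $\sigma_\frakp$ for $\frakp\mid\frakM$ generate a subgroup $G\subset\mathrm{Sym}(\Cl(\calO))$ which is a quotient of $(\Z/2)^{\omega(\frakM)}$, and its orbits on $\Cl(\calO)$ are precisely the classes $X_i\in\Cl_S(\calO)$. By \cref{prop:wt0} we have $\dim\calM_\zero^\chi(\calO)=|\Cl_S(\calO)^{\chi-\adm}|$, and since the $\chi=+_\frakM$ case realizes the full $|\Cl_S(\calO)|$, the hypothesis forces every $X_i$ to be $\chi$-admissible for every $\chi$. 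I claim this is equivalent to $G\simeq(\Z/2)^{\omega(\frakM)}$ acting freely on $\Cl(\calO)$. Indeed, any relation $\sigma_{\frakp_1}\cdots\sigma_{\frakp_r}(x_j)=x_j$ with distinct $\frakp_j$ (including kernel relations in $G$ as well as genuine nontrivial stabilizers) produces a closed walk in $\Sigma_\chi$ carrying the signs $\chi_{\frakp_1},\ldots,\chi_{\frakp_r}$; choosing $\chi$ with $\chi_{\frakp_1}\cdots\chi_{\frakp_r}=-1$ makes the cycle unbalanced and obstructs $\chi$-admissibility of $X_i$. Conversely, free action identifies each $X_i$ with a $(\Z/2)^{\omega(\frakM)}$-torsor of size $2^{\omega(\frakM)}$, and any sign pattern $\chi$ then determines a character of $G$ giving a valid bipartition of $X_i$.

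Now fix $\bfk$, a sign pattern $\chi$, $X_i\in\Cl_S(\calO)$ with basepoint $x_i$, and $v\in V_\bfk^{\Gamma_i}$; I will produce $\phi\in\calM_\bfk^\chi(\calO)$ supported on $X_i$ with $\phi(x_i)=v$, which is exactly $\chi$-admissibility in weight $\bfk$. Choose representatives $w_g\in\hat B^\times$ for each $g\in G$, taking $w_g$ to be a product of distinct $\varpi_{B_\frakp}$'s corresponding to $g$, with $w_e=1$, and set $\chi(w_g):=\prod_{\frakp\mid\frakM}\chi_\frakp^{\eps_\frakp(g)}$ where $\eps_\frakp(g)\in\{0,1\}$ records whether $\varpi_{B_\frakp}$ appears in $w_g$. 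By freeness, every $y$ in the $B^\times$-orbit of $x_i\cdot\{w_g\}\cdot\hat\calO^\times$ admits a unique expression $y=\gamma x_i w_g u$ with $g\in G$ determined and the pair $(\gamma,u)\in B^\times\times\hat\calO^\times$ determined up to the action of $\Gamma_i$ on the left and $\hat\calO^\times$ on the right. Define
\[
\phi(y):=\chi(w_g)\,\rho_\bfk(\gamma)\,v,
\]
and set $\phi=0$ on all other double cosets.

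The main technical point is well-definedness. If $y=\gamma_1 x_i w_g u_1=\gamma_2 x_i w_g u_2$ with the same $w_g$ (forced by freeness), then $\gamma_2^{-1}\gamma_1\in x_i w_g\hat\calO^\times w_g^{-1}x_i^{-1}\cap B^\times$, and the fact that each $\varpi_{B_\frakp}$ normalizes $\hat\calO^\times$ (the computation underlying \cref{lem:41}) collapses this intersection to $\Gamma_i$; since $v\in V_\bfk^{\Gamma_i}$, we obtain $\rho_\bfk(\gamma_1)v=\rho_\bfk(\gamma_2)v$. Central invariance under $\hat F^\times$ is automatic from $h_F^+=1$ as recalled in \cref{sec:qmfs}. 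Once well-defined, $\phi$ is manifestly $B^\times$-equivariant on the left and $\hat\calO^\times$-invariant on the right, and right multiplication by $\varpi_{B_\frakp}$ sends the $w_g$-representative to a representative of $\sigma_\frakp g$ (modulo $\hat\calO^\times$ and the center), picking up exactly the factor $\chi_\frakp$; thus $T_\frakp\phi=\chi_\frakp\phi$ for every $\frakp\mid\frakM$. This proves that $X_i$ is $\chi$-admissible in weight $\bfk$, and then the dimension equality across all $\chi$ is immediate from \eqref{eq:Mk-ClS}. The main obstacle is the characterization step translating weight-zero admissibility into free action, and then pushing through the well-definedness verification with the $\rho_\bfk$-twist in play.
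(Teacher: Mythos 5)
Your argument is correct, but it follows a genuinely different route from the paper. The paper proves the lemma by induction on the number of primes dividing $\frakM$: at each step the weight-zero hypothesis (through \eqref{eq:hBS-dim}) gives $|\Cl_S(\calO)| = \tfrac 12 |\Cl_{S_0}(\calO)|$, and then a soft dimension squeeze --- $\dim \calM_\bfk^{\chi_0}(\calO) = 2\sum_i \dim V_\bfk^{\Gamma_i}$ by the inductive hypothesis, while $\dim \calM_\bfk^{\chi_0}(\calO) = \dim \calM_\bfk^{\chi}(\calO) + \dim \calM_\bfk^{\chi'}(\calO)$ with each summand bounded above by $\sum_i \dim V_\bfk^{\Gamma_i}$ --- forces both admissibility and the dimension equality without ever constructing a form. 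You instead convert the hypothesis, via \cref{prop:wt0} and the balance criterion for signed (multi)graphs, into the statement that the $\sigma_\frakp$ generate a copy of $(\Z/2)^{\omega(\frakM)}$ acting freely on $\Cl(\calO)$, and then exhibit explicit $\chi$-eigenforms supported on a single $S$-class by twisting by the corresponding character of this group; the well-definedness hinges on the $\varpi_{B_\frakp}$ normalizing $\hat\calO^\times$, which is the same computation as \cref{lem:41}, and the central issues ($\varpi_\frakp \in F^\times \hat\frako_F^\times$, trivial central character of $\rho_\bfk$) are correctly disposed of using $h_F^+=1$. Your approach is longer but buys more: it makes the hypothesis visibly equivalent to the torsor structure of the $S$-classes, it realizes the isomorphism \eqref{eq:Mk-ClS} by an explicit section (forms supported on one class with prescribed value at the basepoint), and the parity/unbalanced-walk argument explains structurally why any failure of freeness destroys admissibility for a suitable $\chi$. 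One small point worth making explicit if you write this up: your constructed $\phi$ lies in the joint eigenspace of the commuting involutions $T_\frakp$, $\frakp \mid \frakM$, which coincides with $\calM_\bfk^\chi(\calO)$ as defined via eigenforms precisely because the full Hecke algebra is commutative and acts semisimply (this is the content of \eqref{eq:chi-cond1} in the paper), so no gap results.
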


\begin{proof}
We prove this by induction on $\frakM$.  It is vacuously true for $\frakM=\frako_F$, so
suppose $\frakM = \frakp_0 \frakM_0$ and assume the lemma is true for $\frakM_0$.
Write $\Cl_S(\calO) = \{ X_1, \ldots, X_t \}$ and order $x_1, \ldots, x_h$ so $x_i \in X_i$
for $1 \le i \le t$.   Put $S_0 = \{ \frakp | \frakM_0 \}$.
The hypothesis in the lemma with $\chi, \chi'$ taken to be the two
sign patterns for $\frakM$ which restrict to $+_{\frakM_0}$ for $\frakM_0$ 
implies $\Cl_S(\calO) = \frac 12 \Cl_{S_0}(\calO)$ by \eqref{eq:hBS-dim}.
Then for any $X_i \in \Cl_S(\calO)$, we may write $X_i = Y_i \sqcup Y_i'$ where
$Y_i, Y_i' \in \Cl_{S_0}(\calO)$.  By \cref{lem:41}, the $\Gamma_j$'s
are the same for all $x_j \in X_i$.

Fix a sign pattern $\chi$ for $\frakM$ and let $\chi_0$ be the restriction of $\chi$ to $S_0$.  Let $\chi'$ be the extension of $\chi_0$ to $S$ such that $\chi'_{\frakp_0} = - \chi_{\frakp_0}$.
On one hand, we have
\[ \dim \calM_{\bfk}^{\chi_0}(\calO) = 2 \sum_{i=1}^t \dim V_\bfk^{\Gamma_i}. \]
On the other hand, we have
\[ \dim \calM_{\bfk}^{\chi_0}(\calO) = \dim \calM_{\bfk}^{\chi}(\calO)
+  \dim \calM_{\bfk}^{\chi'}(\calO). \]
But each of the dimensions on the right is at most
$\sum_{i=1}^t \dim V_\bfk^{\Gamma_i}$, so our previous equation means in fact
$ \dim \calM_{\bfk}^{\chi}(\calO) =  \dim \calM_{\bfk}^{\chi'}(\calO) =
\sum_{i=1}^t \dim V_\bfk^{\Gamma_i}$.  This implies both the admissibility and
dimension assertions.
\end{proof}

\begin{cor} \label{cor:admQ}
Suppose $F = \Q$ and $M | N$ such that, for each divisor $d | M$ with $d \ne 1$,
there exists an odd $p | \frac NM$ such that ${-d \leg p} = 1$.  If $M$ is even, we further assume $\frac NM$ is divisible by a prime $p \equiv 1 \mod 4$.  Then each $X_i \in \Cl_M(\calO)$ is $\chi$-admissible in weight $\bfk$ for all weights $\bfk$ and sign patterns
$\chi$ for $M$.
\end{cor}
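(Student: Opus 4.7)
The plan is to reduce the statement to \cref{lem:equidist}: that lemma says that if the dimensions $\dim \calM_\zero^\chi(\calO)$ coincide for every sign pattern $\chi$ for $M$, then every $X_i \in \Cl_M(\calO)$ is $\chi$-admissible in every weight $\bfk$ and for every $\chi$. So my goal is to prove perfect equidistribution of these weight-zero dimensions under the stated quadratic-residue hypotheses.

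Via the Jacquet--Langlands identification \eqref{eq:JLdim2}, this reduces to perfect equidistribution of the augmented newform dimensions $\dim S_2^{\new,\eps}(N)^*$ across sign patterns $\eps$ for $M$. Next, character orthogonality on the sign-pattern group $\{\pm 1\}^{\omega(M)}$ gives the decomposition
\[
\dim S_2^{\new,\eps}(N) = \frac{1}{2^{\omega(M)}} \sum_{d | M} \Big(\prod_{p | d} \eps_p\Big)\, \tr_{S_2^\new(N)}(W_d),
\]
with $W_d = \prod_{p | d} W_p$. Since $E_{2,N}$ contributes only to the $\eps = -_M$ component, perfect equidistribution of the augmented spaces $S_2^{\new,\eps}(N)^*$ is equivalent to the trace identity $\tr_{S_2^\new(N)}(W_d) = (-1)^{\omega(d)+1}$ for every $d | M$ with $d > 1$.

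I would then verify these trace identities by applying the explicit $W_d$-trace formulas from \cite{me:dim}. Each such trace factors as a product of local contributions, one per prime divisor of $N$, in the same style as the single-prime case recalled in \cref{lem:fixpt}. Under the hypothesis that some odd $p | N/M$ satisfies ${-d \leg p} = 1$, the local factor at $p$ vanishes, killing the nontrivial piece of the formula; this generalizes the $\omega(d) = 1$ calculation of \cref{lem:fixpt}(a). When $M$ is even, the extra assumption on a prime $\equiv 1 \mod 4$ in $N/M$ kills the extra $2$-adic correction term that appears when $2 | d$, exactly as in \cref{lem:fixpt}(b).

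The main obstacle will be the combinatorial bookkeeping required to promote the $\omega(d) = 1$ vanishing of \cref{lem:fixpt} to arbitrary squarefree $d | M$: one must check that the single distinguished prime $p$ really annihilates every summand of the more complex multi-prime trace formula simultaneously, not merely one contribution. Once this simultaneous vanishing is established for all $d | M$ with $d > 1$, the dimensions agree and \cref{lem:equidist} delivers $\chi$-admissibility in every weight $\bfk$.
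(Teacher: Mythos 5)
Your reduction is exactly the paper's: combine \cref{lem:equidist} with the Jacquet--Langlands identification \eqref{eq:JLdim2}, so that the corollary comes down to perfect equidistribution of $\dim S_2^{\new,\eps}(N)^*$ over the sign patterns $\eps$ for $M$. At that point the paper simply cites \cite[Thm 3.3]{me:dim}, whose hypotheses are verbatim those of the corollary, and stops. Your proposal instead tries to re-derive that input: the character-orthogonality step is fine, and your reformulation that equidistribution of the augmented spaces is equivalent to the identities $\tr_{S_2^\new(N)} W_d = (-1)^{\omega(d)+1}$ for every $d \mid M$ with $d>1$ is correct.

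The gap is that these trace identities for composite $d$ are never actually verified; you explicitly defer this as ``the main obstacle.'' \cref{lem:fixpt} only covers $\omega(d)=1$, and for general squarefree $d$ the trace of $W_d$ on $S_2^\new(N)$ is not the single-prime formula with $p$ replaced by $d$: it involves class numbers of more than one imaginary quadratic order (discriminants of the shape $-d$ and $-4d$), different main terms, and more delicate behavior at the prime $2$ --- this is precisely why the paper calls the general formula of \cite{me:dim} ``somewhat complicated.'' One does expect each elliptic contribution to carry a factor of the form $\left( {-d \leg p} - 1 \right)$ over the odd prime $p \mid \frac NM$ (which divides $\frac Nd$ for every $d \mid M$) supplied by the hypothesis, with the prime $\equiv 1 \bmod 4$ condition disposing of the extra $2$-adic terms when $2 \mid d$, but that simultaneous vanishing across all terms and all $d$ is exactly the content of \cite[Thm 3.3]{me:dim} and must either be checked against the actual multi-prime trace formula or cited. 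As written, your argument establishes the corollary only modulo that unproved claim; the quickest repair is to invoke \cite[Thm 3.3]{me:dim} directly, as the paper does.
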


\begin{proof}
By the lemma, we want to know that the sign patterns for $M$ are perfectly equidistributed
in the space $\calM_\zero(\calO)$, i.e.,  that $\dim S_2^{\new, \eps}(N) =
\dim S_2^{\new, -_M}(N) + 1$ for all sign patterns $\eps$ for $M$ with $\eps \ne -_M$.
This is immediate from \cite[Thm 3.3]{me:dim} (which also immediately implies
the sign patterns for $M$ are perfectly equidistributed in higher weight).
\end{proof}

\begin{rem} \label{rem:41}
If $M$ is prime which is $7 \mod 8$ and $N$ is even, then the conclusion of the
corollary also holds by \cref{lem:fixpt}.
\end{rem}

\section{Congruences}

Now we prove a congruence result under admissibility hypotheses.  In particular,
we will find that
equidistribution of sign patterns in weight 0 implies sign patterns are
in some sense equidistributed in congruence classes in all weights.

Let $F, B, \calO, \frakN$ be as above.  Fix a set of representatives $x_1, \ldots, x_h$
for $\Cl(\calO)$.

\subsection{Integrality}

First we describe some notions and properties of integrality.  

Recall $\tau_1, \ldots, \tau_d$ are the embeddings of $F$ into $\C$.  Let
$E/F$ be a totally imaginary quadratic extension which splits $B$.  
Then we may fix an embedding of $B$ into $M_2(E)$ so that $\calO$ maps into 
$M_2(\frako_E)$.
If $v_i$ is the place of $F$ associated to $\tau_i$, the embedding of $B$ into $M_2(E)$
induces an embedding $\tau^B_i: B_{v_i} \to M_2(\C)$ such that $\calO$ maps into
$M_2(\frako_{E_i})$, where $E_i$ the image of $E$ under an extension of $\tau_i$.
We take these embeddings in our definition of $(\rho_\bfk, V_\bfk)$.  In particular,
$\rho_{k_i}(\gamma) \in M_{k_i+1}(\frako_{E_i})$ for $\gamma \in \calO$.

Let $R \subset \C$ be a ring such that $\tau_i^B(\calO) \subset M_2(R)$
for all $1 \le i \le d$.   Realizing $V_\bfk = \C^n$,
let $V_\bfk(R) = R^n$ be the subspace of ``$R$-integral vectors.''  
We say $\phi \in \calM_\bfk(\calO)$ is $R$-integral (with respect 
to $x_1, \ldots, x_h$) if $\phi(x_i) \in V_\bfk(R)$ for $1 \le i \le h$.
Let $\calM_\bfk(\calO; R)$ be the $R$-submodule of $R$-integral forms in 
$\calM_\bfk(\calO)$ (with respect to $x_1, \ldots, x_h$).  

Recall for any Hecke operator $T = T_\alpha$, there exists a finite collection of 
$\beta_j \in \hat B^\times$ such that for any $\phi \in \calM_\bfk(\calO)$,
\[ (T\phi)(x) = \sum \phi(x \beta_j). \]
For any $1 \le i \le h$, we can write $x_i \beta_j = z_{ij} \gamma_{ij} x_{m_{ij}} u_{ij}$
for some $z_{ij} \in Z(B^\times)$, $\gamma_{ij} \in B^\times \cap \calO$,
$1 \le m_{ij} \le h$, and $u_{ij} \in \hat \calO^\times$.  
Then
\[ (T\phi)(x_i) = \sum \rho_\bfk(\gamma_{ij}) \phi(x_{m_{ij}}), \quad 1 \le i \le h. \]
By our integrality condition on $\gamma_{ij}$ and assumptions on $R$, $T\phi$ is $R$-integral when $\phi$ is.  

Moreover, viewing $\phi$ as a vector in $\C^{nh}$ formed by concatenating the vectors $\phi(x_i) \in \C^n$ for $1 \le i \le h$, 
we can think of $T$ as given by a $nh \times nh$ Brandt matrix
with entries in $R$ (in fact in $\Z_{\ge 0}$ when $\bfk = \zero$).  
Since there exists a Hecke operator $T=T_\alpha$
with distinct eigenvalues, a basis of 
eigenforms of $\calM(\calO)$ can be described as a complete set of eigenvectors 
for some $R$-integral matrix $T$.  Thus $\calM(\calO)$ has a basis consiting of 
$R$-integral eigenforms for some integer ring $R$.

For integral $\phi, \phi' \in \calM_{\bfk}(\calO; R)$ and an ideal $\ell$ of $R$, 
we write $\phi \equiv \phi' \mod \ell$
if the vectors $\phi(x_i)$ and $\phi'(x_i)$ are coordinate-wise congruent mod $\ell$
for all $i$.

\subsection{Congruences under admissibility}

Let $\frakM | \frakN$ and $S = \{ \frakp | \frakM \}$.

\begin{thm} \label{mainthm}
Let $\phi \in \calM_\bfk(\calO)$ be an eigenform, $\chi$
a sign pattern for $\frakM$, and $\ell | 2$ a prime of $\bar \Q$.  Suppose 
each $X \in \Cl_S(\calO)$ is $\chi$-admissible in weight $\bfk$.  Then
there exists an eigenform $\phi' \in \calM_\bfk^\chi(\calO)$ such that
$a_\frakp(\phi) \equiv a_\frakp(\phi') \mod \ell$ for all primes $\frakp$ of $F$.
\end{thm}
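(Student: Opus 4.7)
The plan is to use the admissibility hypothesis to construct an explicit form $\tilde\phi \in \calM_\bfk^\chi(\calO)$ congruent to $\phi$ modulo $\ell$, and then extract the desired eigenform $\phi'$ via a Deligne--Serre style lifting argument.

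Order $x_1,\ldots,x_h$ so that $x_1,\ldots,x_t$ represent $\Cl_S(\calO)$. Since each $X_i \in \Cl_S(\calO)$ is $\chi$-admissible in weight $\bfk$, the isomorphism \eqref{eq:Mk-ClS} furnishes a unique $\tilde\phi \in \calM_\bfk^\chi(\calO)$ with $\tilde\phi(x_i) = \phi(x_i)$ for $1 \le i \le t$. Let $\delta_\frakp \in \{\pm 1\}$ denote the $T_\frakp$-eigenvalue of $\phi$ for each $\frakp \mid \frakM$, so that $\phi$ satisfies \eqref{eq:chi-cond1} with $\delta_\frakp$ in place of $\chi_\frakp$. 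Comparing this relation for $\phi$ with the corresponding one for $\tilde\phi$ along any chain $x_j = \sigma_{\frakp_r} \cdots \sigma_{\frakp_1}(x_i)$ from $i \le t$ to $j > t$ yields by induction
\[ \tilde\phi(x_j) = \Bigl( \prod_{s=1}^{r} \chi_{\frakp_s}\, \delta_{\frakp_s} \Bigr) \phi(x_j). \]
Each factor $\chi_{\frakp_s}\delta_{\frakp_s} \in \{\pm 1\}$ is $\equiv 1 \pmod \ell$ since $\ell \mid 2$, so $\tilde\phi \equiv \phi \pmod \ell$ coordinate-wise.

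For the second step, fix a sufficiently large ring of integers $R \subset \bar \Q$ containing all Hecke eigenvalues of $\phi$ and a prime above $\ell$, and rescale so that $\phi \in \calM_\bfk(\calO; R)$ has nonzero reduction mod $\ell$. Then $\tilde\phi \in \calM_\bfk(\calO; R)$ as well, since $\tilde\phi(x_j) = \pm \phi(x_j)$ for every $j$. Because $\calM_\bfk^\chi(\calO)$ is stable under the full Hecke algebra $\mathbb T$ (the ramified operators $T_\frakp$, $\frakp | \frakM$, act by the scalar $\chi_\frakp$), the intersection $L := \calM_\bfk^\chi(\calO) \cap \calM_\bfk(\calO; R)$ is a $\mathbb T$-stable $R$-lattice containing $\tilde\phi$. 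Its reduction $\bar{\tilde\phi} = \bar\phi$ is a nonzero $\mathbb T$-eigenvector in $L/\ell L$ with eigenvalue system $\bar\lambda$. By the Deligne--Serre lemma (after possibly enlarging $R$), there exists an eigenform $\phi' \in \calM_\bfk^\chi(\calO)$ whose eigenvalue system $\lambda'$ reduces to $\bar\lambda$ modulo $\ell$, giving $a_\frakp(\phi') \equiv a_\frakp(\phi) \pmod \ell$ for all primes $\frakp$.

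The main subtlety is the final lifting step: one must verify that $L$ is a genuine $\mathbb T$-stable free $R$-lattice to which Deligne--Serre applies cleanly, which requires care about how the integral structure on $\calM_\bfk(\calO)$ (depending on the choice of $x_1,\ldots,x_h$) restricts to the subspace $\calM_\bfk^\chi(\calO)$. After localizing $R$ at $\ell$, this reduces to the $R$-integrality of the relevant Brandt-type matrices developed in \cref{sec:hecke}, which is standard.
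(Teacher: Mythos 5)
Your proposal is correct and follows essentially the same route as the paper: use admissibility and the restriction isomorphism \eqref{eq:Mk-ClS} to build a form in $\calM_\bfk^\chi(\calO)$ agreeing with $\phi$ on $x_1,\ldots,x_t$, note that it differs from $\phi$ only by signs (hence is congruent to $\phi$ mod $\ell$ since $\ell \mid 2$), and then apply the Deligne--Serre lifting lemma on a Hecke-stable $R$-lattice inside $\calM_\bfk^\chi(\calO)$. The only (immaterial) difference is your choice of lattice $L = \calM_\bfk^\chi(\calO) \cap \calM_\bfk(\calO;R)$, where the paper instead uses the $R$-span of the integral eigenforms lying in $\calM_\bfk^\chi(\calO)$; both are Hecke-stable and contain the constructed congruent form, so either works.
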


\begin{proof}
 Let $K$ be sufficiently large number field.  Namely, assume $K$ contains
the rationality fields for all eigenforms in $\calM_\bfk(\calO)$ and 
$\tau_i^B(\calO) \subset M_2(\frako_K)$ for all $1 \le i \le d$. 
We may assume $\phi$ is $\frako_K$-integral with respect to $x_1, \ldots, x_h$.

Let $\calI$ be a prime ideal of $\frako_K$ under $\ell$, and $R$ the localization of
$\frako_K$ at $\calI$.
A priori, if $\calI$ is not principal, it may not be possible to scale the values of $\phi$ so that
$\phi$ is $R$-integral and $\phi \not \equiv 0 \mod \calI$, but we can pass to a finite
extension of $K$ (i.e., enlarge $K$ if necessary) that principalizes $\calI$ to assume this.

Let $\eps$ be the sign pattern for $\frakM$ such that $\phi \in \calM_\bfk^\eps(\calO)$.
Write   $\Cl_S(\calO) = \{ X_1, \ldots, X_t \}$ 
and order $x_1, \ldots, x_h$ so that $x_i \in X_i$ for $1 \le i \le t$.
For each $\frakp | \frakM$ and $1 \le i \le h$, 
let $\gamma_{i,\frakp} \in \Gamma^{\sigma_\frakp}(x_i)$.
Then $\phi$ is determined by $\phi(x_1), \ldots, \phi(x_t)$ and 
$\phi(x_i) = \eps_\frakp \rho_\bfk(\gamma_{i,\frakp}) \phi(\sigma_\frakp(x_i))$
for all $i, \frakp$.

We define a function $\phi'$ on $\Cl(\calO)$ as follows.  For $1 \le i \le t$, let
$\phi'(x_i) = \phi(x_i)$.  Extend $\phi'$ to $\Cl(\calO)$ by requiring
$\phi(x_i) = \chi_\frakp \rho_\bfk(\gamma_{i,\frakp}) \phi(\sigma_\frakp(x_i))$
for all $i, \frakp$.  Then $\phi' \in \calM^\chi_\bfk(\calO)$ by \eqref{eq:Mk-ClS}, 
and $\phi'(x_i) = \pm \phi(x_i)$ for $1 \le i \le h$.
Thus $\phi' \equiv \phi \mod 2$ with respect to 
$x_1, \ldots, x_h$.  
However, this $\phi'$ need not be an eigenform.

Take a basis of $\calM_\bfk(\calO)$ consisting of eigenforms 
$\phi_1, \ldots, \phi_m \in \calM_\bfk(\calO; K)$ such that $\phi_1 = \lambda \phi$
for some $\lambda \in K^\times$.  Since
$\phi' \in \calM_\bfk(\calO; R)$ we have that $\phi' = \sum c_j \phi_j$ for some
$c_j \in K$.  By rescaling our basis vectors if necessary, 
we may assume
$\phi'$ and $\phi$ are $R$-linear combinations of $\phi_1, \ldots, \phi_m$.    

Let $M$ be the $R$-module generated by $\phi_1, \ldots, \phi_m$,
and $M^\chi$ be the submodule generated by the collection of $\phi_j$'s which lie
in $\calM^{\chi}_\bfk(\calO)$.  Then $\phi' \in M^\chi$.

Then the integrality property of
Hecke operators implies each Hecke operator $T_\alpha$ acts on $M$ as well
as $M/\calI M$.  Now $\phi' \equiv \phi \mod \calI$, so the image of 
$\phi'$ in $M^\chi/\calI M^\chi$ is a (nonzero) mod $\calI$ eigenvector of each $T_\alpha$.  
The Deligne--Serre lifting lemma \cite[Lem 6.11]{deligne-serre} now tells us there
is an eigenform $\phi'' \in M^\chi$, i.e.\ some $\phi_j$, 
which has the same mod $\calI$ Hecke eigenvalues  
as $\phi'$, and thus $\phi$.  (Note the Deligne--Serre lemma does not tell us that we may 
take $\phi'' \equiv \phi \mod \calI$---cf.\ \cite[(3.3)]{me:cong}.)
\end{proof}

Let $S_\bfk^\new(\frakN)^*$ be the space $S_\bfk^\new(\frakN)$ if $\bfk \ne \two$
and $S_\two^\new(\frakN) \oplus \C E_{2, \frakN}$ if $\bfk = \two$.  Similarly,
for a sign pattern $\eps$ for $\frakM$,
let $S_\bfk^{\new,\eps}(\frakN)^*$ be $S_\bfk^{\new,\eps}(\frakN)$ unless 
$\bfk=\two$ and $\eps = -_\frakM$, in which case it is 
$S_\two^{\new,-_\frakM}(\frakN) \oplus \C E_{2, \frakN}$.

\begin{cor} \label{cor1}
Let $\frakM | \frakN$.
Suppose all sign patterns $\eps$ for $\frakM$ are equidistributed in the space
$S_\two^\new(\frakN)^*$, i.e.\ $\dim S_\two^{\new,\eps}(\frakN)^*$ is
independent of $\eps$.  Let $\bfk \in (2\mathbb N)^d$, 
$f$ an eigenform in $S_\bfk^\new(\frakN)^*$ and
$\ell$ a prime of $\bar \Q$ above $2$.  Then for any
sign pattern $\eps$ for $\frakM$, there exists an eigenform 
$g \in S_\bfk^{\new,\eps}(\frakN)^*$ such that
$a_\frakp(f) \equiv a_\frakp(g) \mod \ell$ for all primes $\frakp$.
In particular, there are at least $2^{\omega(\frakM)}$ eigenforms in $S_\bfk^\new(\frakN)^*$
which are congruent to $f$ mod $\ell$.
\end{cor}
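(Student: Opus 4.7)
The plan is to transport everything to the quaternionic side via Jacquet--Langlands, apply \cref{mainthm}, and then pull back. Let $\bfk' = \bfk - \two$ (so $\bfk' = \zero$ exactly when $\bfk = \two$). By \eqref{eq:JLdim} and \eqref{eq:JLdim2}, the spaces $S_{\bfk}^\new(\frakN)^*$ and $S_{\bfk}^{\new,\eps}(\frakN)^*$ correspond as Hecke modules to $\calM_{\bfk'}(\calO)$ and $\calM_{\bfk'}^{-\eps}(\calO)$ respectively; in particular, the eigenform $f$ pulls back to an eigenform $\phi \in \calM_{\bfk'}(\calO)$.

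Next, the equidistribution hypothesis that $\dim S_\two^{\new,\eps}(\frakN)^*$ is independent of $\eps$ translates, again via \eqref{eq:JLdim2}, to the statement that $\dim \calM_\zero^\chi(\calO)$ is independent of $\chi$. This is precisely the hypothesis of \cref{lem:equidist}, whose conclusion is that every class $X \in \Cl_S(\calO)$ is $\chi$-admissible in weight $\bfk'$, for every sign pattern $\chi$ for $\frakM$. Given any sign pattern $\eps$ for $\frakM$, I set $\chi = -\eps$ and invoke \cref{mainthm} to produce an eigenform $\phi' \in \calM_{\bfk'}^\chi(\calO)$ with $a_\frakp(\phi) \equiv a_\frakp(\phi') \mod \ell$ for every prime $\frakp$ of $F$. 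Pushing $\phi'$ forward via Jacquet--Langlands gives an eigenform $g \in S_{\bfk}^{\new,\eps}(\frakN)^*$ whose unramified Hecke eigenvalues match those of $\phi'$, and hence are congruent to those of $f$ mod $\ell$. At a ramified prime $\frakp | \frakN$, both $a_\frakp(f)$ and $a_\frakp(g)$ equal $\pm 1$, so the congruence mod $\ell$ is automatic since $\ell | 2$.

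For the lower bound on the number of congruent eigenforms, observe that there are exactly $2^{\omega(\frakM)}$ distinct sign patterns $\eps$ for $\frakM$, and the corresponding eigenforms $g = g_\eps$ lie in the distinct subspaces $S_{\bfk}^{\new,\eps}(\frakN)^*$ (they are separated by their $W_\frakp$-eigenvalues at the primes $\frakp | \frakM$), so they are pairwise distinct. Each is congruent to $f$ mod $\ell$, yielding the claimed $2^{\omega(\frakM)}$ eigenforms. The main conceptual point, and the only subtlety worth double-checking, is that the Jacquet--Langlands correspondence respects the full Hecke structure including ramified primes via the identification $T_\frakp = -W_\frakp$ recorded in \cref{sec:JL}; this and the observation $+1 \equiv -1 \mod \ell$ together guarantee that the quaternionic congruence of \cref{mainthm} actually delivers a congruence of Hilbert modular eigenforms at every prime.
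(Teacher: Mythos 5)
Your proposal is correct and is exactly the paper's argument: the paper's proof of this corollary is the one-line instruction to combine the Jacquet--Langlands correspondence (via \eqref{eq:JLdim}--\eqref{eq:JLdim2}), \cref{lem:equidist}, and \cref{mainthm}, which is precisely what you spell out, including the correct translation $\eps \leftrightarrow \chi = -\eps$ and the observation that ramified eigenvalues $\pm 1$ agree mod $\ell \mid 2$. No gaps; your write-up simply makes the paper's terse proof explicit.
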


\begin{proof}
Use the Jacquet--Langlands correspondence, \cref{lem:equidist}, and the above theorem.
\end{proof}

By \cref{cor:admQ}, this gives \cref{thm1} when $F=\Q$ excepting the assertion that
we can take $g \in S_2^{\new, -_M}(N)$ when the weight $k=2$, $\eps = -_M$ and
$N$ is not an even product of three primes.  We handle this below.

Since, in weight $\zero$, all quaternionic $S$-ideal classes are $+_\frakM$-admissible, the
above theorem also gives the following.

\begin{cor} \label{cor2} 
Let $f \in S_\two(\frakN)$ be a newform and $\ell$ a prime of $\bar \Q$
above $2$.  Then there exists an eigenform
$g \in S_\two(\frakN) \oplus \C E_{2,\frakN}$ such that
$a_\frakp(f) \equiv a_\frakp(g) \mod \ell$ for all $\frakp$ and
$a_\frakp(g) = +1$ for all $\frakp | \frakN$.
\end{cor}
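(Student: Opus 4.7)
The plan is to apply \cref{mainthm} directly, with $\bfk = \zero$ and $\chi = +_\frakN$, and then transfer via the Jacquet--Langlands correspondence. Concretely, the target condition $a_\frakp(g) = +1$ for all $\frakp \mid \frakN$, combined with the relation $T_\frakp = -W_\frakp$ at ramified primes described in \cref{sec:JL}, corresponds on the quaternionic side to an eigenform lying in $\calM_\zero^{+_\frakN}(\calO)$. The space $S_\two(\frakN) \oplus \C E_{2, \frakN}$ on the automorphic side matches $\calM_\zero(\calO)$ under the extended Jacquet--Langlands isomorphism of \cref{sec:JL}.

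The first step is to replace $f$ by the eigenform $\phi \in \calM_\zero(\calO)$ to which it corresponds under Jacquet--Langlands. Next, I would apply \cref{mainthm} with $\frakM = \frakN$, $S = \{ \frakp \mid \frakN \}$, and $\chi = +_\frakN$; to do this I need to verify that every $X \in \Cl_S(\calO)$ is $+_\frakN$-admissible in weight $\zero$. But this is immediate from the definition of $\chi$-admissibility given before \cref{prop:wt0}: the trivial partition $X_i^+ = X_i$, $X_i^- = \emptyset$ is $+_\frakN$-admissible because all edges of $\Sigma_{+_\frakN}$ carry sign $+1$, so there are no sign-$(-1)$ edges to account for. (Equivalently, as remarked after \eqref{eq:M0+}, one has $\dim \calM_\zero^{+_\frakN}(\calO) = h_{B,\frakN}$, the full number of $S$-ideal classes.)

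With admissibility verified, \cref{mainthm} produces an eigenform $\phi' \in \calM_\zero^{+_\frakN}(\calO)$ satisfying $a_\frakp(\phi) \equiv a_\frakp(\phi') \pmod{\ell}$ for all $\frakp$. Transferring back via the Hecke-module isomorphism $\calM_\zero(\calO) \simeq S_\two^{\new}(\frakN) \oplus \C E_{2,\frakN}$, the form $\phi'$ yields an eigenform $g \in S_\two(\frakN) \oplus \C E_{2,\frakN}$ with the same Hecke eigenvalues, hence the desired congruence $a_\frakp(f) \equiv a_\frakp(g) \pmod{\ell}$ away from $\frakN$. At $\frakp \mid \frakN$, the condition $T_\frakp \phi' = +\phi'$ translates via $T_\frakp = -W_\frakp$ to $W_\frakp g = -g$, so $a_\frakp(g) = -w_\frakp(g) = +1$, as required.

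There is no real obstacle here: all the substantive work has been done in constructing $\calM_\zero^{+_\frakN}(\calO)$ and proving \cref{mainthm}. The only thing to be careful about is the bookkeeping between the quaternionic Hecke sign $\chi = +_\frakN$ and the Atkin--Lehner sign $-_\frakN$ on the automorphic side, and the fact that $\calE_\zero(\calO) = \C \one$ lies inside $\calM_\zero^{+_\frakN}(\calO)$, which is why $E_{2,\frakN}$ appears naturally in the statement.
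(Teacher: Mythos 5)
Your proposal is correct and is essentially the paper's own argument: the paper deduces this corollary from \cref{mainthm} with $\chi = +_\frakN$ precisely because every $S$-ideal class is trivially $+_\frakN$-admissible in weight $\zero$, and then transfers through the extended Jacquet--Langlands isomorphism with the same sign bookkeeping $T_\frakp = -W_\frakp$ at $\frakp \mid \frakN$. The only point you leave implicit --- that the congruence also holds at ramified primes --- is immediate since both eigenvalues there are $\pm 1$ and $\ell \mid 2$.
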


This gives \cref{thm2} when $F=\Q$ excepting the assertion about when we may take
$g$ cuspidal.

\subsection{Eisenstein and non-Eisenstein congruences} \label{sec:eis}
Here we will refine the latter corollary to show that when $F=\Q$, we can take 
$g \in S_2^{\new, -_M}(N)$ if $N$ is not an even product of three primes, which will
finish the proof of both \cref{thm1} and \cref{thm2}.

First we refine the main theorem of \cite{me:cong} in the setting
$\ell = 2$ and $h_F^+=1$.  (The proof is also similar.)

\begin{prop} \label{prop:54}
Suppose the numerator of $2^{1-d} |\zeta_F(-1)|N(\frakN)
\prod_{p | \frakN} (1- N(\frakp)^{-1})$ is even and the type number $t_B > 1$.  Then there exists an newform
$g \in S_{\two}(\frakN)$ such that $w_\frakp(g) = -1$ for all $\frakp | \frakN$
and $a_{\frakp}(g) \equiv a_{\frakp}(E_{2, \frakN}) \mod 2$ for all $\frakp$.
\end{prop}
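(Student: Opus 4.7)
My approach mirrors the strategy of \cite{me:cong}, but transposed from $\calM_\zero(\calO)$ to the subspace $\calM_\zero^{+_\frakN}(\calO)$ of forms with ramified Hecke eigenvalue $+1$ everywhere. By the Jacquet--Langlands correspondence and \eqref{eq:JLdim2}, it suffices to produce a cuspidal eigenform $\phi \in \calS_\zero^{+_\frakN}(\calO)$ (the orthogonal complement of $\C\one$ inside $\calM_\zero^{+_\frakN}(\calO)$) satisfying $\phi \equiv c\,\one \pmod{\ell}$ for some prime $\ell$ of $\bar\Q$ above $2$ and some nonzero scalar $c$. Such a $\phi$ corresponds, via \eqref{eq:JLdim}, to a newform $g \in S_\two^{\new,-_\frakN}(\frakN)$ with $w_\frakp(g) = -1$ for every $\frakp \mid \frakN$ and $a_\frakq(g) \equiv 1 + N(\frakq) = a_\frakq(E_{2,\frakN}) \pmod{\ell}$ for every $\frakq \nmid \frakN$; at ramified $\frakp$, the congruence $a_\frakp(g) = 1 = a_\frakp(E_{2,\frakN})$ is automatic from $w_\frakp = -1$. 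By \eqref{eq:M0+}, $\calM_\zero^{+_\frakN}(\calO)$ is the space of $\C$-valued functions on $\Cl_\frakN(\calO)$, which has dimension $t_B$, so the hypothesis $t_B > 1$ guarantees $\calS_\zero^{+_\frakN}(\calO) \ne 0$ and the target is nonempty.

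Next, I would work with the integral lattice $\calM_\zero^{+_\frakN}(\calO;\Z)$ of $\Z$-valued functions on $\Cl_\frakN(\calO)$, which is stable under all unramified Hecke operators $T_\frakq$ (the associated Brandt matrices have entries in $\Z_{\ge 0}$), and which carries the Hecke-equivariant inner product \eqref{eq:IP}. Since the stabilizers $\Gamma_i$ are constant on fibers of $\Cl(\calO) \to \Cl_\frakN(\calO)$ by \cref{lem:41}, the self-pairing $(\one,\one)$ coincides (up to transparent factors of $2$ accounting for fiber sizes) with the Eichler mass of $\calO$, which under the assumption $h_F^+ = 1$ is
\[
M(\calO) \;=\; 2^{1-d}\,|\zeta_F(-1)|\,N(\frakN)\prod_{\frakp \mid \frakN}\bigl(1 - N(\frakp)^{-1}\bigr).
\]

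Let $\mathbf{T}$ denote the commutative Hecke algebra generated by $\{T_\frakq : \frakq \nmid \frakN\}$ acting on $\calM_\zero^{+_\frakN}(\calO;\Z)$, and let $I \subset \mathbf{T}$ be the Eisenstein ideal generated by $T_\frakq - (1 + N(\frakq))$, which annihilates $\one$. Following \cite{me:cong}, Hecke-equivariance of \eqref{eq:IP} shows that the order of the image of $\one$ in the Eisenstein cokernel $\calM_\zero^{+_\frakN}(\calO;\Z)/I\,\calM_\zero^{+_\frakN}(\calO;\Z)$ divides $(\one,\one)$. Evenness of the numerator of $M(\calO)$, combined with the fact that $t_B > 1$ gives room for a cuspidal complement, then forces $2$-torsion in this cokernel lying outside the Eisenstein line---equivalently, the existence of a non-Eisenstein maximal ideal $\frakm \subset \mathbf{T}$ of residue characteristic $2$ containing $I$. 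Applying the Deligne--Serre lifting lemma exactly as in the proof of \cref{mainthm} (enlarging $K$ to principalize the chosen prime above $2$) then produces a cuspidal eigenform $\phi \in \calS_\zero^{+_\frakN}(\calO)$ whose eigenvalues match those of $\one$ modulo $\ell$, and Jacquet--Langlands hands back the desired newform $g$.

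The principal obstacle, and the reason the proof is only \emph{similar} to \cite{me:cong} rather than identical, is the passage from ``numerator of $M(\calO)$ is even'' to ``the Eisenstein cokernel has $2$-torsion projecting nontrivially onto the cuspidal quotient.'' This requires careful bookkeeping of the non-uniform weights $|\frako_F^\times|/|\Gamma_i|$ in \eqref{eq:IP}, verification that the unit index $[\mathbf{T} \cdot \one : \Z\, \one]$ is odd so that the $2$-torsion is truly cuspidal, and checking that restricting from $\calM_\zero(\calO)$ to $\calM_\zero^{+_\frakN}(\calO)$ does not kill the relevant $2$-torsion class. These are local-at-$\frakp \mid \frakN$ refinements of the integrality and pairing analyses already carried out in \cite{me:cong}.
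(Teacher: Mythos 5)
Your reduction is the same as the paper's: pass to $\calM_\zero^{+_\frakN}(\calO)$, note its dimension is $t_B$ so $t_B>1$ makes the cuspidal part nonzero, aim for a cuspidal eigenform whose Hecke eigenvalues agree with those of $\one$ mod a prime above $2$, and transfer back by Jacquet--Langlands. But the heart of the argument is missing. The step you describe as ``evenness of the numerator of $M(\calO)$, combined with $t_B>1$, forces $2$-torsion in the Eisenstein cokernel lying outside the Eisenstein line'' is exactly the statement that has to be proved, and you do not prove it --- you yourself list the required verifications (behaviour of the non-uniform weights $|\frako_F^\times|/|\Gamma_i|$, oddness of a unit index, survival of the torsion class after restricting to $\calM_\zero^{+_\frakN}(\calO)$) as open bookkeeping. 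Moreover, the one precise assertion you do extract from Hecke-equivariance of \eqref{eq:IP}, namely that the order of the image of $\one$ in the Eisenstein cokernel \emph{divides} $(\one,\one)$, points in the wrong direction: an upper bound of that kind can never by itself force the existence of a non-Eisenstein maximal ideal of residue characteristic $2$; for that you need a lower bound on a congruence module, which is precisely what is not supplied.

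The paper avoids the Eisenstein-ideal formalism entirely and constructs the congruent cusp form by hand. Using \cref{lem:41}, the weights $|\frako_F^\times|/|\Gamma_i|$ in \eqref{eq:IP} are constant on each of the $t=t_B$ connected components $X_j$ (the $\frakN$-ideal classes), so $(\one,\one)=m(\calO)=\sum_j c_j^{-1} n_j$ with $n_j=|X_j|$. One then looks for $\phi'\in\calM_\zero^{+_\frakN}(\calO)$ taking a constant \emph{odd} integer value $a_j$ on each $X_j$; such a $\phi'$ is automatically $\equiv\one\bmod 2$, and orthogonality to $\one$ becomes the single integer equation $\sum_j m_j a_j=0$, where $m_j=\lambda n_j/c_j$ are integers scaled to have $\gcd(m_1,\dots,m_t)=1$. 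Evenness of the mass numerator says $\sum m_j$ is even, and writing $a_j=2b_j+1$ the equation becomes $\sum m_j b_j=-\tfrac12\sum m_j$, solvable because the $m_j$ are coprime (and meaningful because $t>1$). With this explicit cuspidal $\phi'\equiv\one\bmod 2$ in hand, the Deligne--Serre argument of \cref{mainthm} yields the cuspidal eigenform and hence $g$. To complete your proof you would need either to carry out this construction or to genuinely establish the lower-bound/congruence-module statement you asserted; as written, the proposal stops exactly where the proof begins.
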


\begin{proof} Consider the graph $\Sigma_\chi$ described above when 
$\chi$ is the sign pattern $+_\frakN$ for $\frakN$ with components
$X_1, \ldots, X_t$.  Let $n_j = |X_j|$ for $1 \le j \le t$.  Recall 
$t =  t_B$, and $t > 1$ means $\calS^{+_\frakN}_\zero(\calO) \ne 0$.
By \cref{lem:fixpt}, for a fixed $X_j$ the coefficients $\frac{|\frako_F^\times|}{|\Gamma_i|}$ 
appearing in \eqref{eq:IP} are identical for all $i$ with
$x_i \in X_j$.  Let $c_j$ be this number for $X_j$.  Then
$(\one, \one) = \sum_{i=1}^h \frac{|\frako_F^\times|}{|\Gamma_i|} = \sum_{j=1}^t c_j^{-1} n_j$.
This number is the mass $m(\calO)$ of $\calO$ studied by Eichler, and
equals $2^{1-d} |\zeta_F(-1)|N(\frakN) \prod_{p | \frakN} (1- N(\frakp)^{-1})$
(see, e.g., \cite{me:cong}).  

Let us define $\phi' \in \calM^{+_\frakN}_\zero(\calO)$ by $\phi'(x_i) = a_j$ for all
$x_i \in X_j$, where $a_j \in 2\Z + 1$ for $1 \le j \le t$.  Then $\phi' \equiv \one
\mod 2$, and the argument with the Deligne--Serre lemma above will give our
proposition if we can choose $\phi' \in \calS^{+_\frakN}_\zero(\calO)$.  By \eqref{eq:IP},
this means we want to show there is a
solution to $\sum_{j=1}^t c_j^{-1} a_j n_j = 0$ in the $a_j$'s.  We can scale the
quantities $\frac{n_j}{c_j}$ by some $\lambda \in \Q^\times$ so that
$m_j = \lambda \frac{n_j}{c_j} \in \Z$ for  $1\le j\le t$ and 
$\gcd(m_1, \ldots, m_t) = 1$.  

The hypothesis that $m(\calO)$ is even means $\sum m_j$ also is.
Writing $a_j = 2 b_j + 1$ for all $j$, our desired (scaled) linear equation is that
$\sum m_j 2 b_j = - \sum m_j$, i.e., $\sum m_j b_j = - \frac 12 \sum m_j$,  
which has a solution as $\gcd(m_1, \ldots, m_t) = 1$.
\end{proof}

Hence if the hypotheses of this proposition are satisfied, we can take
$g$ to be a newform in $S_2(\frakN)$ in \cref{cor2}.

From now on, assume $F=\Q$.  

Then the mass $m(\calO)$ is just $\frac{\phi(N)}{12}$, where
$\phi(N) = \prod_{p | N} (p-1)$, so \cref{prop:54} tells us we 
can take $g$ to be a cusp form if $8 | \phi(N)$ and $t_B > 1$.  Recall
$t_B \ge 2^{-\omega(N)} h_B$ and $h_B \ge m(\calO)$, so $t_B > 1$ whenever
$\phi(N) > 12 \cdot 2^{\omega(N)}$.  This is automatic if $N$ has at least 5
prime divisors, in which case we also have $8 | \phi(N)$.  Hence if $\omega(N) > 3$,
we may take $g$ to be a cusp form in \cref{thm2}.

Suppose $N = p_1 p_2 p_3$ with $2 < p_1 < p_2 < p_3$.  Automatically
$8 | \phi(N)$.  Also, if $p_1 \ge 5$ or $p_1 = 3$, $p_2 \ge 7$, or 
$p_1 = 3$, $p_2 = 5$, $p_3 \ge 17$ then the above reasoning shows $t_B > 1$.  
The remaining possibilities are $N = 3 \cdot 5 \cdot 7$, $N = 3 \cdot 5 \cdot 11$ or 
$N = 3 \cdot 5 \cdot 13$, and in fact one checks that $t_B > 1$ in these three cases
as well.  Hence if $N$ is an odd product of 3 primes, we can take $g$ to be a 
cusp form in \cref{thm2}.

To finish the theorems in the introduction, it thus remains treat $N$ prime.  

\begin{prop} \label{prop:55}
Let $f \in S_2^{\new, \chi}(N)$ be a newform.  Suppose there
exists $p | N$ such that $w_p(f) = +1$ but $p$ does not satisfy any of the following conditions:
\begin{enumerate}[(a)]
\item $p \equiv 7 \mod 8$ and $N$ is even; or 

\item $p \ne 2$ and ${-p \leg q} = 1$ for some odd prime $q | N$; or

\item $p=2$, $N$ is divisible by a prime which is $1 \mod 4$ as well a (not necessarily 
different) prime $q$ such that ${-2 \leg q} = 1$.
\end{enumerate}
Then there exists a newform $g \in S_2^{\new, -_N}(N)$
such that $f \equiv g \mod 2$.
\end{prop}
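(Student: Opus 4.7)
The plan is to pass to the quaternionic side via Jacquet--Langlands, construct a function $\phi' \in \calM^{+_N}_\zero(\calO)$ congruent to the quaternionic form for $f$ modulo a prime $\calI$ above $2$, and then apply Deligne--Serre to the quotient Hecke-module $\calM^{+_N}_\zero(\calO;R)/R\one$ so that the resulting eigenform is automatically cuspidal.

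Let $\phi \in \calS^\chi_\zero(\calO)$ correspond to $f$, normalized so that $\phi \in \calM_\zero(\calO;R)$ and $\phi \not\equiv 0 \mod \calI$.  Since $w_p(f) = +1$ and $T_p = -W_p$ for $p|N$, we have $T_p\phi = -\phi$, i.e.\ $\phi(\sigma_p(x)) = -\phi(x)$ for every $x$.  By the hypothesis and \cref{lem:fixpt}, $\sigma_p$ has a fixed point $x^* \in \Cl(\calO)$, so $\phi(x^*) = 0$.  Because the involutions $\sigma_\frakq$ ($\frakq|N$) all commute (they arise from commuting idelic coordinates), the entire $N$-ideal class $X^* \ni x^*$ consists of fixed points of $\sigma_p$, whence $\phi \equiv 0$ on all of $X^*$.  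Now choose a representative $x_{i_j}$ of each $X_j \in \Cl_N(\calO)$, insisting that $x_{i_{X^*}} := x^*$, and define $\phi' \in \calM^{+_N}_\zero(\calO;R)$ by $\phi'|_{X_j} \equiv \phi(x_{i_j})$.  Since the values of $\phi$ on $X_j$ are $\pm \phi(x_{i_j})$ by the transformation rule and $\calI|2$, we get $\phi' \equiv \phi \mod \calI$ pointwise; in particular $\phi'(x^*) = 0$.

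Consider the quotient Hecke-module $M' := \calM^{+_N}_\zero(\calO;R)/R\one$: this is $R$-free (since $\one$ is primitive in the standard basis of characteristic functions of $N$-ideal classes), and its eigenforms in $M' \otimes_R K'$ are precisely the cuspidal eigenforms in $\calS^{+_N}_\zero(\calO) \otimes K'$.  The image $[\phi']\in M'$ is a mod $\calI$ Hecke eigenvector with eigenvalues $a_\frakq(\phi)\mod\calI$, and it is nonzero in $M'/\calI M'$: otherwise $\phi' \equiv c\one \mod \calI$ for some $c$, so evaluating at $x^*$ gives $c \equiv 0$, hence $\phi \equiv \phi' \equiv 0 \mod \calI$, contradicting primitivity.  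Invoking Deligne--Serre \cite[Lem 6.11]{deligne-serre} on $M'$ then yields a cuspidal eigenform $\tilde \psi \in \calS^{+_N}_\zero(\calO)\otimes K'$ whose eigenvalues reduce to those of $\phi$ modulo a prime above $\calI$; via \eqref{eq:JLdim}, $\tilde \psi$ corresponds to a newform $g \in S_2^{\new, -_N}(N)$ with $a_n(f) \equiv a_n(g) \mod \ell$ for all $n$, where $\ell$ is a prime of $\bar\Q$ above $2$.

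The hard part is the non-vanishing of $[\phi']$ modulo $\calI$, which is precisely where the hypothesis enters: without a fixed point of $\sigma_p$, $\phi$ need not vanish anywhere, and $\phi'$ could be a nontrivial multiple of $\one$ mod $\calI$ (corresponding to an Eisenstein congruence for $f$); with a fixed point present and $T_p \phi = -\phi$ forcing $\phi(x^*) = 0$, this possibility is ruled out and Deligne--Serre in the cuspidal quotient lands on a cuspidal eigenform.
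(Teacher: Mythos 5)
Your argument is correct, and the first half coincides with the paper's: you produce $\phi'\in\calM_\zero^{+_N}(\calO)$ congruent to $\phi$ mod $\calI$ and use \cref{lem:fixpt} plus $T_p\phi=-\phi$ to force $\phi'(x^*)=0$ at a fixed point $x^*$ of $\sigma_p$. Where you diverge is the mechanism for extracting a \emph{cuspidal} congruent eigenform. The paper expands $\phi'=\sum a_j\phi_j$ over eigenforms of $\calM_\zero^{+_N}(\calO)$ and appeals to the refined argument of \cite[Thm 2.1]{me:cong}, which (after possibly replacing $\phi'$ by a congruent form) guarantees that \emph{every} constituent with $a_j\ne 0$ is congruent to $\phi'$ mod $2$; the vanishing $\phi'(x^*)=0\ne\one(x^*)$ then rules out $\phi'$ being a multiple of $\one$ alone, so some cuspidal constituent occurs. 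You instead run Deligne--Serre on the quotient Hecke module $\calM_\zero^{+_N}(\calO;R)/R\one$, whose eigensystems are exactly the cuspidal ones, and use $\phi'(x^*)=0$ to show the image of $\phi'$ is nonzero mod $\calI$ (if $\phi'\equiv c\one$ then $c\equiv 0$, contradicting $\phi\not\equiv 0$). This is a clean and self-contained alternative: it avoids the external multiplicity argument at the modest cost of checking that the quotient is a free Hecke-stable $R$-module, which holds since $\one$ is primitive in the basis of characteristic functions of $N$-ideal classes and the Hecke operators preserve both integrality and the $+_N$-eigenspace. Two small remarks: the appeal to commutativity of the $\sigma_\frakq$ is unnecessary, since $\phi\equiv 0$ on all of $X^*$ already follows from the sign rule $\phi(\sigma_\frakq(x))=\pm\phi(x)$; and one should note (as you implicitly do) that a congruence of Hecke eigenvalues at all primes yields $a_n(f)\equiv a_n(g)$ for all $n$ by multiplicativity and the prime-power recursions.
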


\begin{proof}
Let $\phi$ be an associated integral newform to $f$.
By \cref{lem:fixpt}, the hypothesis on $p$ implies $\sigma_p$ has at least one fixed point 
$x_i \in \Cl(\calO)$.  The condition $T_p \phi = - \phi$ then implies $\phi(x_i) = 0$.
Now the construction of $\phi' \in \calM_0^{+_N}(\calO)$ in \cref{mainthm}/\cref{cor2} with 
$\phi' \equiv \phi \mod 2$ also means $\phi'(x_i) = 0$.  

Write $\phi' = \sum a_j  \phi_j$ as a sum of eigenforms.
Note $a_j = 0$ unless $\phi_j \in \calM_0^{+_N}(\calO)$ since $\phi' \in \calM_0^{+_N}(\calO)$.
While we used the Deligne--Serre lemma above to get an eigenform $\phi''$ with the
same Hecke eigenvalues as $\phi'$ mod 2, we gave a different argument for this
type of result in the proof of Theorem 2.1 of \cite{me:cong}.
That argument tells us that (possibly upon replacing $\phi'$ with a different form
in $\calM_0^{+_N}(\calO)$ which is congruent to $\phi'$ mod 2) the Hecke eigenvalues of $\phi_j$ are congruent to the
Hecke eigenvalues of $\phi'$ mod 2 for all $j$ such that $a_j \ne 0$.  Say $\phi_m = \one$
is the constant function generating $\calE_2(\calO)$.  Since $\phi'(x_i) = 0 \ne \phi_1(x_i)$, 
it is not possible
that $a_j = 0$ for all $j \ne m$.  This gives (at least one) $\phi_j \in \calS_0^{+_N}(\calO)$
congruent to $\phi$ mod 2, which gives our desired $g$ by the Jacquet--Langlands 
correspondence.
\end{proof}

Let us finish by explicating additional conditions when $F=\Q$ and
$k=2$ where we can take $g$ to be a cusp form in \cref{thm2}.
Assume $N = 2 p_1p_2$ with $2 < p_1 < p_2$.  

First note that $8 | \phi(N)$ if $p_1$ or $p_2$ is $1 \mod 4$.
Here $t_B > 1$ if $p_1 \ge 11$ or $p_1 = 7$, $p_2 \ge 19$ or $p_1 = 5$, $p_2 \ge 29$ or $p_1 = 3$, $p_2 \ge 53$.  Thus $t_B > 1$ if $N > 294$ by this reasoning,
and exact calculation of class numbers shows in fact $t_B > 1$ if $N > 258$.
Hence if $N > 258$ is an even product of 3 primes,
at least one of which is $1 \mod 4$, then we can take $g$ to be a cusp form
in \cref{thm2} by \cref{prop:54}.

On the other hand, suppose $p_1 \equiv p_2 \equiv 3 \mod 4$. 
Then $p=2$ never satisfies (a), (b) or (c) of \cref{prop:55}.  Now $p_1, p_2$
never satisfy (c), and satisfy (a) if and only if they are $7 \mod 8$. 
By quadratic reciprocity, ${-p_1 \leg p_2}
= (-1) {-p_2 \leg p_2}$, so (b) of \cref{thm2} will be satisfied for exactly one of
$p=p_1$ and $p=p_2$.  Hence \cref{prop:55} cannot be used to guarantee
$g$ is a cusp form in the remaining cases of $N = 2 p_1 p_2$ for $f$ with arbitrary
signs.  However, it can be used to say we can take $g$ to be a cusp form
if $w_{p_1}(f) = w_{p_2}(f) = +1$ (or just $w_{p_i}(f) = +1$ for whichever $p_i$
does not satisfy (b)) and $p_1 \equiv p_2 \equiv 3 \mod 8$.

%%%%%%%%%%%%%%%%%%%%%%%%%%%%%%%%
%%%%%%%%%%%%%%%%%%%%%%%%%%%%%%%%
%
%  REFERENCES
%
%%%%%%%%%%%%%%%%%%%%%%%%%%%%%%%%
%%%%%%%%%%%%%%%%%%%%%%%%%%%%%%%%

\begin{bibdiv}
\begin{biblist}

\bib{deligne-serre}{article}{
   author={Deligne, Pierre},
   author={Serre, Jean-Pierre},
   title={Formes modulaires de poids $1$},
   language={French},
   journal={Ann. Sci. \'Ecole Norm. Sup. (4)},
   volume={7},
   date={1974},
   pages={507--530 (1975)},
   issn={0012-9593},
%   review={\MR{0379379}},
}

\bib{hasegawa-hashimoto}{article}{
   author={Hasegawa, Yuji},
   author={Hashimoto, Ki-ichiro},
   title={On type numbers of split orders of definite quaternion algebras},
   journal={Manuscripta Math.},
   volume={88},
   date={1995},
   number={4},
   pages={525--534},
   issn={0025-2611},
%   review={\MR{1362936}},
%   doi={10.1007/BF02567839},
}

\bib{hida}{article}{
   author={Hida, Haruzo},
   title={Congruence of cusp forms and special values of their zeta
   functions},
   journal={Invent. Math.},
   volume={63},
   date={1981},
   number={2},
   pages={225--261},
   issn={0020-9910},
   review={\MR{610538}},
   doi={10.1007/BF01393877},
}

\bib{lehung-li}{article}{
   author={Le Hung, Bao V.},
   author={Li, Chao},
   title={Level raising ${\rm mod}\,2$ and arbitrary 2-Selmer ranks},
   journal={Compos. Math.},
   volume={152},
   date={2016},
   number={8},
   pages={1576--1608},
   issn={0010-437X},
%   review={\MR{3542487}},
%   doi={10.1112/S0010437X16007454},
}

\bib{me:cong}{article}{
   author={Martin, Kimball},
   title={The Jacquet-Langlands correspondence, Eisenstein congruences, and integral L-values in weight 2},
   status={Math. Res. Let., to appear},
  date={2016},
  label={Mar1},
}

\bib{me:dim}{article}{
   author={Martin, Kimball},
   title={Refined dimensions of cusp forms, and equidistribution and bias of signs},
   status={Preprint},
  date={2016},
  label={Mar2},
}

\bib{mazur}{article}{
   author={Mazur, B.},
   title={Modular curves and the Eisenstein ideal},
   journal={Inst. Hautes \'Etudes Sci. Publ. Math.},
   number={47},
   date={1977},
   pages={33--186 (1978)},
   issn={0073-8301},
%   review={\MR{488287 (80c:14015)}},
}

\bib{pizer}{article}{
   author={Pizer, Arnold},
   title={The action of the canonical involution on modular forms of weight
   $2$\ on $\Gamma_{0}(M)$},
   journal={Math. Ann.},
   volume={226},
   date={1977},
   number={2},
   pages={99--116},
   issn={0025-5831},
%   review={\MR{0437463}},
}

\bib{shemanske-walling}{article}{
   author={Shemanske, Thomas R.},
   author={Walling, Lynne H.},
   title={Twists of Hilbert modular forms},
   journal={Trans. Amer. Math. Soc.},
   volume={338},
   date={1993},
   number={1},
   pages={375--403},
   issn={0002-9947},
%   review={\MR{1102225}},
%   doi={10.2307/2154461},
}

\bib{yoo}{unpublished}{
   author={Yoo, Hwajong},
   title={Non-optimal levels of a reducible mod $\ell$ modular representation},
   note={arXiv:1409.8342v3},
}

\end{biblist}
\end{bibdiv}
\end{document}